\newtheorem{Thm}{Theorem}[section]
\newtheorem{Lem}[Thm]{Lemma}
\newtheorem{Cor}[Thm]{Corollary}
\newtheorem{Def}[Thm]{Definition}
\newcommand{\NN}{\ensuremath{\mathbb{N}}}
\newcommand{\ZZ}{\ensuremath{\mathbb{Z}}}
\newcommand{\BBB}{\ensuremath{\mathcal{B}}}
\title{On the matrix sequence $\{\Gamma(A^m)\}_{m=1}^\infty$ for a Boolean matrix $A$ whose digraph is linearly connected}
\author{
\begin{tabular}{c}
{\sc Jihoon Choi} and {\sc Suh-Ryung KIM}\thanks{Corresponding author}\, \thanks{This work was supported by
the National Research Foundation of Korea(NRF) grant funded by the Korea government(MEST) (No.\ NRF-2010-0009933).}
\\{\footnotesize Department of Mathematics Education,
Seoul National University, Seoul 151-742, Korea}
\end{tabular} }
\begin{document}
\maketitle
\begin{abstract}
In this paper, we extend the results given by Park~{\em et al.}~\cite{ppk} by studying the convergence of the matrix sequence $\{\Gamma(A^m)\}_{m=1}^\infty$ for a matrix $A \in \mathcal{B}_n$ the digraph of which is linearly connected with an arbitrary number of strong components. In the process for generalization, we concretize ideas behind their arguments. We completely characterize $A$ for which $\{\Gamma(A^m)\}_{m=1}^\infty$ converges. Then we find its limit when all of the irreducible diagonal blocks are of order at least two. We go further to characterize  $A$ for which the limit of $\{\Gamma(A^m)\}_{m=1}^\infty$ is  a $J$ block diagonal matrix.   All of these results are derived by studying the $m$-step competition graph of the digraph of $A$.
\end{abstract}

\noindent
{\bf Keywords:} irreducible Boolean $(0,1)$-matrices; powers of Boolean $(0,1)$-matrices; linearly connected digraphs; index of imprimitivity; $m$-step competition graphs; graph sequence; powers of digraphs.

\noindent
{\small {{MSC2010:} 05C20, 05C50}}

\section{Introduction}
 For the two-element Boolean algebra $\mathcal{B}=\{0,1\}$, $\mathcal{B}_n$ denotes the set of all $n \times n$ matrices over $\mathcal{B}$.  Park~{\em et al.}~\cite{ppk} studied the convergence of $\{\Gamma(A^m)\}_{m=1}^\infty$ for a matrix $A \in \mathcal{B}_n$ whose digraph has at most two strong components where $\Gamma(A)=(\gamma_{ij})$ is defined by \[\gamma_{ij}=\left\{ \begin{array}{ll} 0 & \text{if }i=j; \\
0 & \text{if $i\neq j$ and the inner product of row }i\text{ and row }j\text{ of $A$ is }0; \\
1  &\text{if $i \neq j$ and the inner product of row $i$ and row $j$ of $A$ is not $0$.} \end{array}\right.\]
They noticed that, for a matrix $A \in \mathcal{B}_n$, $\Gamma(A)$ is the adjacency matrix of the competition graph of the digraph of $A$. Given a matrix $A$ in $\mathcal{B}_n$, there exists a unique digraph whose adjacency matrix is $A$. We call such a digraph the {\em digraph of $A$} and denote it by $D(A)$.

Given a digraph $D$, the {\em competition graph} $C(D)$ of $D$
has the same vertex set as $D$ and has an edge between vertices $u$ and $v$
if and only if there exists a common prey of $u$ and $v$ in $D$. If $(u,v)$  is an arc of a digraph $D$,
then we call $v$ a {\em prey} of $u$ (in $D$) and call $u$ a {\em predator} of $v$ (in $D$). A graph $G$ is called the {\em row graph} of a matrix $M$ if the rows of $M$ are the vertices of $G$, and two vertices are adjacent in $G$ if and only if their corresponding rows have a nonzero entry in the same column of $M$. This notion was studied by Greenberg~{\em et al.}~\cite{glm}. As noted in \cite{glm}, the competition graph of a digraph $D$ is the row graph of its adjacency matrix. Thus it can easily be checked that the adjacency matrix of the competition graph of a digraph $D$ is $\Gamma(A)$ where $A$ is the adjacency matrix of $D$.

The notion of competition graph is due to Cohen~\cite{cohen1} and has arisen from ecology. Competition graphs also have applications in coding, radio
transmission, and modeling of complex economic systems.  (See \cite{RayRob} and \cite{Bolyai} for a summary of these applications.)

 The greatest common divisor of all lengths of directed cycles in a nontrivial digraph $D$ is called the \textit{index of imprimitivity of $D$}. A digraph $D$ is said to be {\em primitive} if $D$ is strongly connected and has the index of imprimitivity $1$. Let  $A$ be a matrix in $\mathcal{B}_n$. We call the index of imprimitivity of $D(A)$ the \textit{index of imprimitivity of $A$}. If $D(A)$ is primitive, then we say that $A$ is {\em primitive}. If $D(A)$ is strongly connected, then we say $A$ is {\em irreducible}. For undefined terms in this paper, the reader may refer to \cite{Brualdi}.

 It is well-known that for an irreducible matrix $A$ in $\mathcal{B}_n$, the matrix sequence $\{A^m\}_{m=1}^\infty$ converges if and only if $A$ is primitive.
Yet, a matrix sequence $\{\Gamma(A^m)\}_{m=1}^\infty$ might converge even if the matrix $A$ is not primitive.
For example, the $m$th power of the matrix $A$ given in Figure~\ref{fig1} does not converge as $m$ increases since it is not primitive. However, the sequence $\{\Gamma(A^m)\}_{m=1}^{\infty}$ converges to $A'$
 since $\Gamma(A^m)=A'$ for any positive integer $m$.
\begin{figure}
\begin{center}
{\footnotesize
\[\begin{array}{lllll}
A=\left(\begin{array}{cccc} 0 & 1 & 0 & 1 \\ 0 & 0 & 1 & 0 \\ 1 & 0 & 0 & 0 \\ 0 & 0 & 1 & 0 \end{array} \right)
& A^2=\left(\begin{array}{cccc} 0 & 0 & 1 & 0 \\ 1 & 0 & 0 & 0 \\ 0 & 1 & 0 & 1 \\ 1 & 0 & 0 & 0 \end{array} \right)
& A^3=\left(\begin{array}{cccc} 1 & 0 & 0 & 0 \\ 0 & 1 & 0 & 1 \\ 0 & 0 & 1 & 0 \\ 0 & 1& 0 & 1 \end{array} \right)
& A^4=\left(\begin{array}{cccc} 0 & 1 & 0 & 1 \\ 0 & 0 & 1 & 0 \\ 1 & 0 & 0 & 0 \\ 0 & 0 & 1 & 0 \end{array} \right)
& \cdots
\end{array}\]
\[A'=\left(\begin{array}{cccc} 0 & 0 & 0 & 0 \\ 0 & 0 & 0 & 1 \\ 0 & 0 & 0 & 0 \\ 0 & 1 & 0 & 0 \end{array} \right)\]
\caption{An example given by Park~{\em et al.}~\cite{ppk}. We note that $A$, $A^2$, $A^3$ are all distinct and $A^4=A$. Thus $\{A^m\}_{m=1}^{\infty}$ does not converge. However $\Gamma(A^m)=A'$ for each positive integer $m$.
}
 \label{fig1} }
\end{center}
\end{figure}

Park~{\em et al.}~\cite{ppk} showed that $\{\Gamma(A^m)\}_{m=1}^\infty$ converges as $m$ increases for any irreducible Boolean matrix $A$ and its limit is a block diagonal matrix each of whose blocks consists of all $1$s except in the main diagonal and all $0$s in the main diagonal up to conjugation by simultaneous permutation of rows and columns.  They also completely characterized a matrix $A \in \mathcal{B}_n$  whose digraph  has exactly two strongly connected components (in short, strong components) and for which  $\{\Gamma(A^m)\}_{m=1}^\infty$ converges, and found the limit of $\{\Gamma(A^m)\}_{m=1}^\infty$ in terms of its digraph when it converges. They derived these facts in terms of the competition graph of the digraph of $A$.

 Given a digraph $D$ and a positive integer $m$, a vertex $y$ is an {\em $m$-step prey} of a vertex $x$ if and only if there exists a directed walk from $x$ to $y$ of length $m$. Given a digraph $D$ and a positive integer $m$, the digraph $D^m$ has the same vertex set as $D$ and has an arc $(u,v)$ if and only if $v$ is an $m$-step prey of $u$. It is well-known that a digraph $D$ is primitive if and only if $D^m$ equals the digraph which has all possible arcs for any $m \ge N$ for some positive integer $N$ (we call the smallest such integer $N$ the {\em exponent} of $D$). Motivated by this, Park~{\em et al.}~\cite{ppk} introduced the notion of convergence of $\{G_n\}_{n=1}^{\infty}$ as follows: A graph sequence $\{G_n\}_{n=1}^{\infty}$ (resp.\ digraph sequence) \emph{converges} if there exists a positive integer $N$ such that $G_n$ is equal to $G_N$ for any $n\ge N$. They called the graph $G_N$ the \textit{limit} of the graph sequence (resp.\ digraph sequence). Then they translated their goals described above into competition graph version and showed that $\{C(D^m)\}_{m=1}^\infty$ converges to a graph with only complete components as $m$ increases if $D$ is strongly connected,  completely characterized a digraph $D$ with exactly two strong components for which $\{C(D^m)\}_{m=1}^\infty$ converges, and found the limit of $\{C(D^m)\}_{m=1}^\infty$ when $\{C(D^m)\}_{m=1}^\infty$ converges.

Given a positive integer $m$, the {\em $m$-step competition graph} of a digraph $D$, denoted by $C^m(D)$, has the same vertex set as $D$ and has an edge between vertices $u$ and $v$ if and only if there exists an $m$-step common prey of $u$ and $v$. The notion of $m$-step competition graph is introduced by Cho~{\em et al.}~\cite{firsti} and one of the important variations (see the survey articles by Kim~\cite{Kim93} and Lundgren~\cite{Lundgren89} for the variations which have been defined and studied by many authors since Cohen introduced the notion of competition). Since its introduction, it has been extensively studied (see for example \cite{Belmont,ChoHKim,He,HKim,Ho,PLK,ZC}). Cho~{\em et al.}~\cite{ChoHKim} showed that for any digraph $D$ and a positive integer $m$, $C^m(D)=C(D^m)$. Thus the limit of the  graph sequence $\{C(D^m)\}_{m=1}^\infty$, if it exists, is the same as that of the graph sequence $\{C^m(D)\}_{m=1}^\infty$.  Consequently studying the  graph sequence $\{C(D^m)\}_{m=1}^\infty$ is actually studying the sequence of $m$-step competition graphs of $D$.

In this paper, we extend the results given by Park~{\em et al.}~\cite{ppk} by studying the convergence of $\{\Gamma(A^m)\}_{m=1}^\infty$ for a matrix $A \in \mathcal{B}_n$ satisfying
\begin{equation} \label{form}
PAP^T=\left[\begin{array}{cccccc} A_{11} & A_{12} & O & O &  \cdots & O  \\ O & A_{22} & A_{23} & O & \cdots & O \\ O & O & A_{33} & A_{34} & \cdots & O \\ \vdots & \vdots & \vdots & \ddots & \vdots & \vdots \\ O & O & O & O & \cdots & A_{\eta\eta}
\end{array} \right]
\end{equation}
for a permutation matrix $P$ of order $n$, irreducible matrices $A_{11}$, $A_{22}$, $\ldots$, $A_{\eta \eta}$, and nonzero matrices $A_{12}$, $A_{23}$, $\ldots$, $A_{\eta-1,\eta}$
where if a diagonal block $A_{ii}$ is of order at least two and $\kappa_i$ is the index of imprimitivity of the digraph of $A_{ii}$, then
\begin{equation*}
A_{ii} =
\begin{bmatrix}
	O & A_{12}^{(ii)} & O & \cdots & O & O  \\
	O & O & A_{23}^{(ii)} & \cdots & O & O \\
	O & O & O & \cdots & O & O \\
	\vdots & \vdots & \vdots & \ddots & \vdots & \vdots \\
	O & O & O & \cdots & O & A_{\kappa_i -1, \kappa_{i}}^{(ii)} \\
	A_{\kappa_i,1}^{(ii)} & O & O & \cdots & O & O\\
\end{bmatrix}.
\end{equation*}
We completely characterize $A$ for which $\{\Gamma(A^m)\}_{m=1}^\infty$ converges in Section~2.  Then, in Section~3, we find its limit  when $A_{11}$, $A_{22}$, $\ldots$, $A_{\eta\eta}$ are of order at least two. In this case, the convergence of $\{\Gamma(A^m)\}_{m=1}^\infty$ is guaranteed by one of their results:
\begin{Thm}[\cite{ppk}]\label{thm:nontrivial}
If a digraph $D$ is trivial or each vertex of $D$ has an out-neighbor, then $\{C(D^m)\}_{m=1}^{\infty}$ converges.
\label{cor:strong1}
\end{Thm}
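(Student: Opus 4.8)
The plan is to treat the two hypotheses separately, with essentially all of the (short) work going into the second. If $D$ is trivial, then $D^m$ is again a single vertex for every $m$, so each $C(D^m)$ is the edgeless graph on one vertex and the sequence is literally constant; convergence is immediate. So I would assume from now on that every vertex of $D$ has an out-neighbor, and note that all of the graphs $C(D^m)$ share the one common vertex set $V(D)$, so that comparing their edge sets is meaningful.

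The key step I would isolate is a \emph{monotonicity} statement: $E(C(D^m)) \subseteq E(C(D^{m+1}))$ for every positive integer $m$. To prove it, suppose $u$ and $v$ are adjacent in $C(D^m)$. By definition there is an $m$-step common prey $w$, i.e.\ directed walks of length $m$ from $u$ to $w$ and from $v$ to $w$. Because every vertex has an out-neighbor, $w$ has an out-neighbor $w'$; appending the arc $(w,w')$ to each walk yields directed walks of length $m+1$ from $u$ to $w'$ and from $v$ to $w'$. Hence $w'$ is an $(m+1)$-step common prey of $u$ and $v$, so $u$ and $v$ are adjacent in $C(D^{m+1})$. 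This is the single place the out-neighbor hypothesis is used. Given monotonicity, the conclusion follows at once: the edge sets $E(C(D^m))$ form a nondecreasing chain of subsets of the finite set of $\binom{|V(D)|}{2}$ possible edges, so the chain stabilizes, i.e.\ there is an $N$ with $E(C(D^m)) = E(C(D^N))$, and therefore $C(D^m) = C(D^N)$, for all $m \ge N$.

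As for where the real difficulty lies, the point worth stressing is that the sequence $\{C(D^m)\}_{m=1}^\infty = \{\Gamma(A^m)\}_{m=1}^\infty$ is automatically \emph{eventually periodic}, being a function of the eventually periodic Boolean-power sequence $\{A^m\}$, whose period can exceed $1$ (the matrix $A$ of Figure~\ref{fig1} has $\{A^m\}$ of period $3$). The content of the theorem is that the out-neighbor hypothesis forces the period of the competition-graph sequence down to $1$, and the monotonicity lemma is precisely the mechanism that upgrades mere eventual periodicity to genuine convergence; recognizing this invariant is the only substantive step. The two points I would be careful about are purely bookkeeping: confirming that the vertex sets genuinely coincide across all $m$ (so that ``$\subseteq$ on edges'' is legitimate), and checking that the trivial digraph—whose unique vertex has no out-neighbor and so is excluded from the monotonicity argument—is correctly absorbed into the degenerate case.
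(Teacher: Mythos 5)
Your proof is correct: the monotonicity observation $E(C(D^m)) \subseteq E(C(D^{m+1}))$, obtained by extending each $m$-step common prey one further step along an out-arc, together with the finiteness of the edge universe, is exactly the standard argument for this result (which the present paper only quotes from \cite{ppk} without reproducing a proof). The degenerate trivial case and the vertex-set bookkeeping are handled correctly as well.
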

\noindent We go further to generalize one of their results to characterize a matrix $A$ with $A_{11}$, $A_{22}$, $\ldots$, $A_{\eta\eta}$ of order at least two for which the limit of $\{\Gamma(A^m)\}_{m=1}^\infty$ is a $J$ block diagonal matrix. Adopting the notion defined by Park~{\em et al.}~\cite{ppk}, we mean by a {\em $J$ block diagonal} (for short {\em JBD}) matrix a block diagonal matrix each of whose blocks consists of all $1$s except in the main diagonal and all $0$s in the main diagonal up to conjugation by simultaneous permutation of rows and columns.

We derive our results by studying the convergence of $\{C(D^m)\}_{m=1}^{\infty}$ for the digraph $D$ of a matrix $A$ satisfying (\ref{form}). The digraph $D$ is weakly connected and has strong components $D_1$, $\ldots$, $D_\eta$ each of whose arcs goes only from $D_i$ to $D_{i+1}$ for some $i \in \{1,\ldots,\eta-1\}$, which shall be said to be  {\em linearly} connected. We note that a weakly connected digraph with two strong components is linearly connected.

Given a linearly connected digraph $D$ with $\eta$ strong components,  unless otherwise stated, we mean by $D_1$, $\ldots$, $D_\eta$ the strong components of $D$ each of whose arcs goes only from $D_j$ to $D_{j+1}$ for some $j \in \{1,\ldots,\eta-1\}$ and by $D_{i,i+1}$ the subdigraph of $D$ induced by $V(D_i) \cup V(D_{i+1})$ for each $i=1$, $\ldots$, $\eta-1$. We denote by $\kappa(D_i)$ ($\kappa_i$ for short) the index of imprimitivity of $D_i$ and the sets of imprimitivity of $D_i$ by $U_1^{(i)}$, $U_2^{(i)}$, $\ldots$, $U_{\kappa_i}^{(i)}$ for $i=1$, $\ldots$, $\eta$.

In this paper, all the graphs and digraphs are assumed to be simple.

\section{Convergence of $\{\Gamma(A^m)\}_{m=1}^\infty$}
In this section, we completely characterize a matrix $A \in \BBB_n$ in the form given in (\ref{form}) whose digraph is linearly connected and for which $\{\Gamma(A^m)\}_{m=1}^\infty$ converges.

We denote by $\ell(W)$ the length of a walk $W$ in a graph or digraph.
\begin{Lem}\label{lemma2}
Let $D$ be a linearly connected digraph with $\eta$ strong components. Suppose that a vertex  $x$ has an $m$-step prey in a nontrivial component $D_i$ for some positive integer $m$ and $i \in \{1,2,\ldots,\eta\}$. Then $x$ has an $m'$-step common prey in $D_i$ for all $m' \ge m$. Furthermore, every vertex in $D_i$ is a $k$-step prey of $x$ for some positive integer $k$.	
\end{Lem}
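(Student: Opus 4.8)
The plan is to use only two structural features of the component $D_i$: it is strongly connected, and, being nontrivial and contained in a simple digraph, it has at least two vertices, so every one of its vertices has an out-neighbor inside $D_i$. The linear-connectivity hypothesis plays no active role beyond fixing the ambient digraph; what matters is purely the internal structure of $D_i$. I would begin by fixing a directed walk $W$ from $x$ to a vertex $y \in V(D_i)$ with $\ell(W) = m$, which exists by hypothesis since $y$ is an $m$-step prey of $x$.

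For the first assertion, I would observe that because $D_i$ is a nontrivial strong component on at least two vertices, $y$ has an out-neighbor $y_1 \in V(D_i)$ (in a strongly connected digraph on at least two vertices every vertex reaches some other vertex, hence has an out-neighbor within the component). Appending the arc $(y,y_1)$ to $W$ yields a walk of length $m+1$ from $x$ to $y_1 \in V(D_i)$, so $y_1$ is an $(m+1)$-step prey of $x$ lying in $D_i$. Iterating this one-step extension gives, for every $m' \ge m$, a walk of length $m'$ from $x$ that terminates in $D_i$; this proves that $x$ has an $m'$-step prey in $D_i$ for all $m' \ge m$.

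For the \emph{furthermore} part, I would invoke strong connectivity of $D_i$ directly: given any vertex $z \in V(D_i)$, there is a directed walk $P$ from $y$ to $z$ lying entirely in $D_i$. Concatenating $W$ with $P$ produces a walk from $x$ to $z$ of length $k := m + \ell(P)$, so $z$ is a $k$-step prey of $x$. Since $z$ is arbitrary, every vertex of $D_i$ is a $k$-step prey of $x$ for a suitable (vertex-dependent) positive integer $k$.

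I do not anticipate a serious obstacle here; the only point requiring a word of justification is the claim that each vertex of a nontrivial strong component has an out-neighbor inside that component, which is precisely where nontriviality (together with simplicity, ruling out an isolated looped vertex) is used. The remainder is routine concatenation of directed walks, and the single-vertex source $x$ makes the word \emph{common} in the statement vacuous, so it is read simply as \emph{prey}.
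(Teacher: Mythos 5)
Your proposal is correct and follows essentially the same route as the paper: both parts rest on extending the length-$m$ walk inside the nontrivial strong component (the paper traverses a closed walk through $D_i$ for $m'-m$ further steps, you append out-neighbor arcs one at a time, which is the same idea), and the \emph{furthermore} part is proved by the identical concatenation argument. No gaps.
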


\begin{proof}
Let $z \in V(D_i)$ be an $m$-step prey of $x$. Take an integer $m'$ satisfying $m' \ge m$. Since $D_i$ is nontrivial and strongly connected, there exists a closed directed walk of a positive length containing all the vertices in $D_i$. By traversing such a walk, we may find a vertex $z'$ in $D_i$ such that there exists a directed $(z,z')$-walk of length $m'-m$ in $D_i$. Then $z'$ is an $m'$-step prey of $x$.
	
Take any vertex $w \in V(D_i)$. Since $D_i$ is strongly connected, there exists a directed $(z,w)$-walk $W$. Since $z$ is an $m$-step  prey of $x$, $w$ is a $k$-step prey of $x$ for $k = m + \ell(W)$.	
\end{proof}
\noindent The following corollary is immediately true by the above theorem.
\begin{Cor}\label{cor:lemma2}
Let $D$ be a linearly connected digraph with $\eta$ strong components. Then if any two vertices $x$ and $y$ have an $m$-step common prey belonging to a nontrivial strong component for some positive integer $m$, then there exists a positive integer $N$ such that $x$ and $y$ are adjacent in $C(D^m)$ for any integer $m \ge N$.
\end{Cor}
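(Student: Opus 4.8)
The plan is to produce, for every sufficiently large $m'$, a genuine common $m'$-step prey of $x$ and $y$, rather than merely (possibly distinct) $m'$-step preys for each of them separately. Recall that $u$ and $v$ are adjacent in $C(D^{m'})$ precisely when they share a common $m'$-step prey in $D$, so exhibiting such a shared prey is exactly what is required.

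Let $z \in V(D_i)$ be the common $m$-step prey of $x$ and $y$ furnished by the hypothesis, where $D_i$ is nontrivial. The idea is to use $z$ as a hub: since $z$ is reached from both $x$ and $y$ by directed walks of length $m$, any vertex reachable from $z$ by a further $q$ steps is simultaneously an $(m+q)$-step prey of $x$ and of $y$. Thus it suffices to show that for every integer $q \ge 0$ there is some vertex $w \in V(D_i)$ that is a $q$-step prey of $z$. Because $D_i$ is nontrivial and strongly connected, $z$ has an out-neighbor lying in $D_i$, so $z$ has a $1$-step prey in $D_i$. Applying Lemma~\ref{lemma2} with $z$ playing the role of $x$ and with $m=1$, we conclude that $z$ has a $q$-step prey in $D_i$ for every $q \ge 1$; trivially $z$ is a $0$-step prey of itself. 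Hence for each $m' \ge m$, setting $q = m'-m$ and letting $w$ be a $q$-step prey of $z$ in $D_i$, the vertex $w$ is a common $m'$-step prey of $x$ and $y$, so $x$ and $y$ are adjacent in $C(D^{m'})$. Taking $N = m$ then finishes the argument.

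The only genuine subtlety, and the step I expect to be the crux, is the insistence on a single common prey: invoking Lemma~\ref{lemma2} for $x$ and for $y$ separately yields $m'$-step preys that need not coincide, and such preys would not force an edge in $C(D^{m'})$. Routing both walks through the fixed hub $z$ and only afterward extending by a common tail of length $q$ is precisely what converts two separate preys into one shared prey; the structural fact underlying Lemma~\ref{lemma2}, namely that every vertex of a nontrivial strong component can extend a directed walk indefinitely within that component, then supplies the tail of each prescribed length.
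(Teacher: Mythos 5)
Your proof is correct and follows essentially the same route as the paper, which declares the corollary ``immediately true'' from Lemma~\ref{lemma2}: the intended argument is exactly your hub construction, extending the walks from $x$ and from $y$ past the common prey $z$ by a single shared tail of length $m'-m$ inside the nontrivial strong component. You have merely made explicit the subtlety (that the lemma applied to $x$ and $y$ separately would not yield a \emph{common} prey) that the paper leaves implicit, and taking $N=m$ is exactly what the paper's one-line justification amounts to.
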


Corollary~\ref{cor:lemma2} implies that, when two vertices $x$ and $y$ have an $\alpha
$-step common prey $z$ in a nontrivial component of a linearly connected digraph $D$ for a positive integer $\alpha$, the adjacency of  $x$ and $y$ in the limit of $\{C(D^m) \}_{m}^\infty$ is determined by not the value of $\alpha$ but the fact that $x$ and $y$ have a `step common prey' $z$ in a nontrivial component. In this context, we sometimes omit `$\alpha$' and just say that $x$ and $y$ have a `step common prey'.

The following lemma shall be frequently quoted in the rest of this paper.
\begin{Lem} [Lemma 3.4.3, \cite{Brualdi}]\label{brualdi1}
Let $D$ be a nontrivial strongly connected digraph, and $U_1$, $U_2$, $\ldots$, $U_{\kappa(D)}$ be the sets of imprimitivity of $D$.
Then there exists a positive integer $N$ such that if $x$ and $y$ are vertices belonging respectively to $U_i$ and $U_j$, then there are directed $(x,y)$-walks of every length $j-i+t\kappa(D)$ with $t \ge N$.\label{Lem:brualdi}
\end{Lem}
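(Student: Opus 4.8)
The plan is to reduce the statement to a single number-theoretic fact about additive sets of integers, after invoking the defining property of the sets of imprimitivity: every arc of $D$ goes from $U_a$ to $U_{a+1}$, with indices read modulo $\kappa$, where $\kappa=\kappa(D)$. This property forces every directed walk that starts in $U_i$ and has length $\ell$ to terminate in $U_{i+\ell}$, so the length of any directed $(x,y)$-walk with $x\in U_i$ and $y\in U_j$ is necessarily congruent to $j-i$ modulo $\kappa$. The core tool is the elementary lemma that a subset $S\subseteq\NN$ which is closed under addition and satisfies $\gcd(S)=d$ contains every sufficiently large multiple of $d$; equivalently, once its gcd is scaled to $1$, $S/d$ is a numerical subsemigroup of finite complement.

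First I would fix a vertex $v$ and consider the set $S_v$ of lengths of closed directed walks based at $v$. Concatenating two such walks shows $S_v$ is closed under addition, so it remains to verify $\gcd(S_v)=\kappa$. That $\kappa\mid s$ for every $s\in S_v$ is clear, since each closed walk decomposes into directed cycles and every cycle length is divisible by $\kappa$. For the reverse divisibility I would use strong connectivity: given any cycle $C$ of length $\ell$ through a vertex $u$, choose directed paths from $v$ to $u$ and from $u$ to $v$; inserting $C$ between them produces two closed walks at $v$ whose lengths differ by $\ell$, so $\gcd(S_v)$ divides $\ell$. As $C$ ranges over all cycles this gives $\gcd(S_v)\mid\kappa$, whence equality. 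The additive lemma then yields an integer $M_v$ with $t\kappa\in S_v$ for all $t\ge M_v$.

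Finally, I would assemble the required walks. For a pair $x\in U_i$, $y\in U_j$, choose any directed $(x,y)$-walk $P$; its length $p$ satisfies $p\equiv j-i\pmod\kappa$ by the first paragraph, say $p=j-i+c\kappa$ for some integer $c$. Prepending to $P$ a closed walk at $x$ of length $t\kappa$, which is available for every $t\ge M_x$, produces an $(x,y)$-walk of length $p+t\kappa=j-i+(c+t)\kappa$, so every length $j-i+t'\kappa$ with $t'\ge N_{x,y}:=c+M_x$ is realised. Taking $N$ to be the maximum of $N_{x,y}$ over the finitely many ordered pairs of vertices gives the uniform bound claimed. The main obstacle is the middle step: identifying the global invariant $\kappa$, a gcd taken over all cycles which may pass through unrelated vertices, with the gcd of the purely local semigroup $S_v$. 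Strong connectivity is exactly what bridges the two, and everything else is bookkeeping.
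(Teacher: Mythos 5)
The paper offers no proof of this lemma: it is imported verbatim from Brualdi and Ryser (Lemma 3.4.3 of \cite{Brualdi}), so there is no in-paper argument to compare against. Your proof is correct and is essentially the standard textbook one: reducing to the additive set $S_v$ of closed-walk lengths at a fixed vertex, showing $\gcd(S_v)=\kappa(D)$ (one divisibility from the cycle decomposition of closed walks, the other from strong connectivity, which lets you splice an arbitrary cycle into a closed walk at $v$), and then invoking the numerical-semigroup fact that an addition-closed set of positive integers contains every sufficiently large multiple of its gcd. The only spot worth tightening is the degenerate case $u=v$ in your gcd step, where the connecting paths may be empty and one of your two closed walks has length zero; this costs nothing, since then $\ell\in S_v$ directly, and the final uniformization of $N$ over the finitely many ordered pairs of vertices is unproblematic.
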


For a digraph $D$ and a vertex $v$ of $D$, $N_D^-(v)$ denotes the set of all in-neighbors of $v$.

Given a linearly connected digraph $D$, suppose that $D_p$ is a nontrivial component while $D_{p+1}$ is a trivial component consisting of vertex $v$. Let
\[\Lambda(D):=\left\{i \mid U^{(p)}_i \cap N_D^-(v) \neq \emptyset\right\}=\{k_1,\ldots,k_s\}.\] Then, by Lemma~\ref{brualdi1}, it is easy to check that for each $j \in \ZZ_{\kappa_p}$ and $x \in U_j^{(p)}$, there is a positive integer $N_p$ such that there exist $(x,v)$-walks of lengths $(k_1-j)+1+t\kappa_p$, $\ldots$, $(k_s-j)+1+t\kappa_p$ for every $t \ge N_p$. We put
\[L_{j \to v}:=\{(k_r-j)+1\,({\rm mod}\, \kappa_p) \mid r=1, 2, \ldots, s\}.\]
\noindent(See Figure~\ref{conv} for an illustration.)
In general, for a nonnegative integer $i$, we set
\[i+ L_{j \to v}:=\{(k_r-j)+i+1\,({\rm mod}\, \kappa_p) \mid r=1, 2, \ldots, s\}\]
where $L_{j \to v}=0+L_{j \to v}$.
\begin{figure}\psfrag{A}{$U_1^{(1)}$}\psfrag{B}{{$U_2^{(1)}$}}
\psfrag{D}{{ $U_1^{(2)}$}}\psfrag{E}{{$U_2^{(2)}$}}
\psfrag{F}{{$U_3^{(2)}$}}\psfrag{G}{{$U_4^{(2)}$}}
\psfrag{H}{{$v$}}\psfrag{J}{$w$}\psfrag{L}
{$L_{1\to v}=\{0,1,2\}$}\psfrag{M}{$L_{2\to v}=\{0,1,3\}$}\psfrag{N}{$L_{3\to v}=\{0,2,3\}$}\psfrag{O}
{$L_{4\to v}=\{1,2,3\}$}\psfrag{K}{$D$}
\begin{center}
\includegraphics[height=5.5cm]{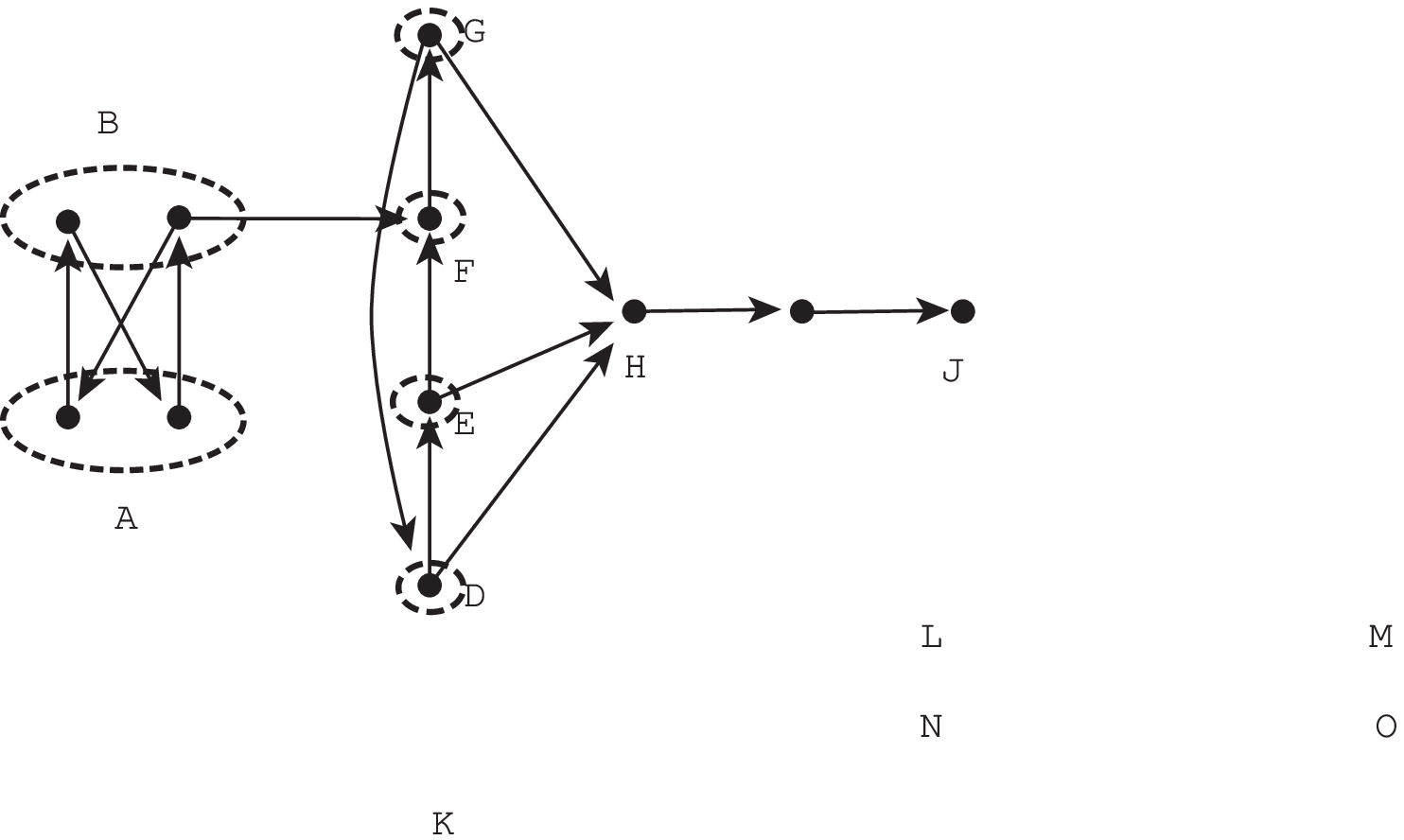}
\end{center}
\caption{It is easy to see that $(L_{i\to v} \cap L_{j\to v})\cup ((1+L_{i\to v}) \cap (1+L_{j\to v}))\cup ((2+L_{i\to v}) \cap (2+L_{j\to v}))=\{0,1,2,3\}$ for any $i$, $j$,  $1 \le i < j \le 4$. Therefore $\{C(D^m)\}_{m=1}^{\infty}$ converges by Theorem~\ref{thm:trivial}. However, if $D'=D-w$, then $(L_{1\to v} \cap L_{2\to v})\cup ((1+L_{1\to v}) \cap (1+L_{2\to v}))=\{0,1,2\}$ and $\{C(D'^m)\}_{m=1}^{\infty}$ diverges by the theorem.}
\label{conv}
\end{figure}
Obviously, if $D_{p+1}$, $\ldots$, $D_{p+\zeta}$ are trivial components of $D$, then, for $i=1$, $\ldots$, $\zeta$,
\begin{align*}
\parbox[h]{13.5cm}{$l \in \left(i+ L_{j_1 \to v} \right) \cap \left(i+ L_{j_2 \to v} \right)$ if and only if the vertex of $D_{p+i+1}$ is an $(l+t\kappa_p)$-step common prey of any vertex in $U_{j_1}^{(p)}$ and any vertex in $U_{j_2}^{(p)}$ for any $t$ greater than or equal to some positive integer $N_{i}$.} \tag{$\ast$}
\end{align*}
\begin{Thm} \label{thm:trivial}
Let $D$ be a linearly connected digraph with exactly $\eta$ strong components. Then  $\{C(D^m)\}_{m=1}^{\infty}$ converges if and only if one of the following is true:
\begin{itemize}
\item[(i)] For each $i=1$, $\ldots$, $\eta$, $D_i$ is trivial.
\item[(ii)] $D_\eta$ is nontrivial.
\item[(iii)] $D_\eta$ is trivial whereas there is a nontrivial strong component in $D$, and if $p$ is the largest index for which $D_p$ is a  nontrivial component, then $\bigcup_{i=0}^{\eta-p-1} ((i+L_{j_1 \to v}) \cap (i+L_{j_2 \to v})) = \emptyset$ or $\ZZ_{\kappa_p}$ for any $j_1$, $j_2$ in $\ZZ_{\kappa_p}$ where $V(D_{p+1})=\{v\}$ and $\Lambda(D)=\left\{i \mid U^{(p)}_i \cap N_D^-(v) \neq \emptyset\right\}=\{k_1,\ldots,k_s\}$ for some integer $s$, $1 \le s \le \kappa_p$.
\end{itemize}
\end{Thm}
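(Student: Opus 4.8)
The plan is to show that $\{C(D^m)\}_{m=1}^\infty$ converges if and only if, for every pair of vertices $x,y$, the truth value of ``$x$ and $y$ are adjacent in $C(D^m)$'' is eventually constant in $m$, and to organize the argument around \emph{where} a common prey of $x$ and $y$ can sit. By Corollary~\ref{cor:lemma2}, if $x$ and $y$ ever have a common prey in a nontrivial strong component, then they are adjacent in $C(D^m)$ for all large $m$; such pairs are automatically stable and can never cause divergence. Cases (i) and (ii) are then disposed of quickly. In case (i) every $A_{i,i+1}$ is a nonzero $1\times1$ block, so $D$ is the directed path $v_1\to\cdots\to v_\eta$; the unique $m$-step prey of $v_i$ is $v_{i+m}$, so no two vertices ever share a prey and $C(D^m)$ is edgeless for every $m$. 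In case (ii), every vertex of $D$ has an out-neighbour (a vertex in a trivial $D_i$ with $i<\eta$ has its arc into $D_{i+1}$, a vertex in a nontrivial component has one inside it, and $D_\eta$ is nontrivial), so convergence follows from Theorem~\ref{thm:nontrivial}.

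For case (iii) the work is to understand the pairs \emph{not} covered by Corollary~\ref{cor:lemma2}; call $x,y$ a \emph{trivial pair} if they never have a common prey in a nontrivial component. First I would show that a trivial pair can only have common preys among the trailing trivial components $D_{p+1},\dots,D_\eta$: if $x,y$ had a common prey $u$ in a trivial component $D_q$ with $q<p$, then following a directed walk from $u$ forward to a vertex $w\in V(D_p)$ would exhibit $w$ as a common prey in the nontrivial component $D_p$, a contradiction. Next, writing $V(D_{p+1})=\{v\}$ and using that $N_D^-(v)\subseteq V(D_p)$, I would characterise, via Lemma~\ref{brualdi1}, exactly when $x$ reaches $v$: letting $E_x\subseteq\ZZ_{\kappa_p}$ be the set of residues $(\alpha-\ell)\bmod\kappa_p$ over all walks from $x$ into $D_p$ ending in $U_\alpha^{(p)}$ at length $\ell$, one gets that for all large $m'$ there is a directed $(x,v)$-walk of length $m'$ if and only if $m'\bmod\kappa_p\in\widehat L_x:=\bigcup_{e\in E_x}L_{e\to v}$. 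Combining this with property~($\ast$), the vertex of $D_{p+i+1}$ is an $m$-step common prey of a trivial pair $x,y$ precisely when $m\bmod\kappa_p\in i+(\widehat L_x\cap\widehat L_y)$, so $x$ and $y$ are adjacent in $C(D^m)$ for all large $m$ exactly when $m\bmod\kappa_p$ lies in $\widehat S_{x,y}:=\bigcup_{i=0}^{\eta-p-1}\bigl(i+(\widehat L_x\cap\widehat L_y)\bigr)$.

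The adjacency of a trivial pair is eventually constant if and only if $\widehat S_{x,y}=\emptyset$ or $\widehat S_{x,y}=\ZZ_{\kappa_p}$, a proper nonempty $\widehat S_{x,y}$ forcing the indicator to oscillate with period dividing $\kappa_p$ and hence $\{C(D^m)\}$ to diverge. The bridge to the stated condition is the distributive identity $\widehat S_{x,y}=\bigcup_{e\in E_x,\,f\in E_y}S_{e,f}$, where $S_{e,f}:=\bigcup_{i=0}^{\eta-p-1}\bigl((i+L_{e\to v})\cap(i+L_{f\to v})\bigr)$ is exactly the set in (iii). For a trivial pair one has $E_x\cap E_y=\emptyset$ (a shared entry residue would produce a common prey inside $D_p$), so every index pair in this union has $e\neq f$. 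Hence if (iii) holds, each such $S_{e,f}$ is $\emptyset$ or $\ZZ_{\kappa_p}$ and a union of such sets is again $\emptyset$ or $\ZZ_{\kappa_p}$, giving convergence; conversely, if (iii) fails for some $j_1\neq j_2$, choosing $x\in U_{j_1}^{(p)}$ and $y\in U_{j_2}^{(p)}$ gives $\widehat S_{x,y}=S_{j_1,j_2}$ proper and nonempty, hence divergence. Together with the nontrivial pairs being permanently adjacent, this yields the equivalence.

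The step I expect to be most delicate is the reduction that lets the \emph{within-$D_p$} sets $S_{j_1,j_2}$ control \emph{all} trivial pairs, including those whose vertices lie in components strictly before $D_p$, whose entry-residue set $E_x$ is then larger than a singleton; the distributive identity is what makes this go through, but it requires checking that the ``for all large $m$'' thresholds coming from Lemma~\ref{brualdi1} can be taken uniformly over the finitely many vertices and relevant walks, so that a single $N$ governs the eventual behaviour. A second point needing attention is the complementary claim that every \emph{non}-trivial pair (those with $E_x\cap E_y\neq\emptyset$, in particular any two vertices in a common set of imprimitivity of $D_p$) really is eventually permanently adjacent; this imposes no constraint on the distinct indices being tested in (iii), and it rests on the primitivity of the return digraph of $D_p$ on each $U_j^{(p)}$ together with Lemma~\ref{brualdi1}, which guarantee that such vertices share an entire set of imprimitivity of $D_p$ as common preys for all large $m$.
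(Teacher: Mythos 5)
Your proposal is correct and follows essentially the same route as the paper: cases (i) and (ii) are dispatched identically, and case (iii) is handled by splitting pairs according to whether they ever have a common prey in a nontrivial component (so that Corollary~\ref{cor:lemma2} settles those pairs) and then controlling the remaining pairs through the sets $L_{j\to v}$ and property~($\ast$), with vertices of $U_{j_1}^{(p)}$ and $U_{j_2}^{(p)}$ for distinct $j_1,j_2$ serving as the divergence witnesses. The only difference is organizational: where you introduce the entry-residue sets $E_x$ and a distributive identity to reduce an arbitrary trivial pair to the within-$D_p$ sets $S_{j_1,j_2}$, the paper reaches the same reduction more directly by truncating a single pair of equal-length walks from $x$ and $y$ at their last vertices $w_1\in U_{j_1}^{(p)}$ and $w_2\in U_{j_2}^{(p)}$ in $D_p$ and then prepending those truncated walks to the common preys supplied by ($\ast$).
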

\begin{proof}
We show the `if' part first.  If (i) is true, then $C(D^m)$ is an edgeless graph with the vertex set $V(D)$ for any positive integer $m$ and so $\{C(D^m)\}_{m=1}^{\infty}$ converges.  If (ii) is true, then $\{C(D^m)\}_{m=1}^{\infty}$ converges by Theorem~\ref{thm:nontrivial}.

Now we suppose that (iii) is true. Take two vertices $x$ and $y$ of $D$.  If $x$ and $y$ do not have an $\alpha$-step common prey for any positive integer $\alpha$, then $x$ and $y$ are not adjacent in $C(D^m)$ for any positive integer $\alpha$. Now consider the case where $x$ and $y$ have a step common prey. If $x$ or $y$ is a vertex of a trivial component appearing after $D_p$, then $x$ and $y$ cannot have an $\alpha$-step common prey for any positive integer $\alpha$. Thus $x$ and $y$ belong to components appearing before $D_{p+1}$. If they have a step common prey in $D_q$ for $q \le p$, then they have a step common prey in $D_p$ and so there exists an integer $M$ such that $x$ and $y$ are adjacent in $C(D^m)$ for any integer $m \ge M$ by Corollary~\ref{cor:lemma2}. Suppose that $x$ and $y$ have a step common prey only in a trivial component appearing after the component $D_p$ and let $w$ be a step common prey of $x$ and $y$. Then there exist a directed $(x,w)$-walk $W_1$ and a directed $(y,w)$-walk $W_2$ of the same length.  Deleting from $W_1$ and $W_2$ the vertices in trivial components appearing after $D_p$, we obtain a directed $(x,w_1)$-walk and a directed $(y,w_2)$-walk of the same length where $w_1 \in U_{j_1}^{(p)}$ and $w_2 \in U_{j_2}^{(p)}$ for some $j_1,j_2 \in \ZZ_{\kappa_p}$. Then $w_1$ and $w_2$ have a 1-step common prey $v$ and so $1 \in L_{j_1 \to v} \cap L_{j_2 \to v}$. Therefore $L_{j_1 \to v} \cap L_{j_2 \to v}\neq \emptyset$. Then, as we assumed that (iii) is true, $\bigcup_{i=0}^{\eta-p-1} ((i+L_{j_1 \to v}) \cap (i+L_{j_2 \to v})) = \ZZ_{\kappa_p}$. By ($\ast$), there exists a positive integer $N_{\kappa_p}$ such that $x$ and $y$ have an $(l+t\kappa_p)$-step common prey for any $l \in \ZZ_{\kappa_p}$ and $t \ge N_{\kappa_p}$, which implies that $x$ and $y$ are adjacent in $C(D^s)$ for any integer $s$ greater than or equal to $N\kappa_p$.

We show the `only if' part by verifying the contrapositive. Suppose that $D_\eta$ is trivial whereas there is a nontrivial strong component in $D$ and that for the largest index $p$ such that $D_p$ is a  nontrivial component, $\emptyset \varsubsetneq \bigcup_{i=0}^{\eta-p-1} ((i+L_{j_1 \to v}) \cap (i+L_{j_2 \to v})) \varsubsetneq \ZZ_{\kappa_p}$ for some $j_1, j_2 \in \ZZ_{\kappa_p}$. Then $(i_1+L_{j_1 \to v}) \cap (i_1+L_{j_2 \to v}) \neq \emptyset$ for some $i_1 \in \{0,\ldots,\eta-p-1\}$. Therefore, by ($\ast$), for some $l \in \ZZ_{\kappa_p}$, the vertex of $D_{p+i_1+1}$ is a common prey of any vertex in $U_{j_1}^{(p)}$ and any vertex in $U_{j_2}^{(p)}$ in $D^{l+t\kappa_p}$ for any integer $t$ greater than or equal to some positive integer $N_{i_1}$. Thus every vertex in  $U_{j_1}^{(p)}$ and every vertex in $U_{j_2}^{(p)}$ are adjacent in $C(D^{l+t\kappa_p})$ for any integer $t \ge N_{i_1}$. On the other hand, since $\bigcup_{i=0}^{\eta-p-1} ((i+L_{j_1 \to v}) \cap (i+L_{j_2 \to v})) \neq \ZZ_{\kappa_p}$, there is an element $l'$ in $\ZZ_{\kappa_p}$ such that $l' \not\in (i+L_{j_1 \to v}) \cap (i+L_{j_2 \to v})$ for each $i=0$, $\ldots$, $\eta-p-1$. Then, by ($\ast$), for any $i=0$, $\ldots$, $\eta-p-1$ and for any positive integer $N$, there exists an integer $t \ge N$ such that the vertex of $D_{p+i+1}$ is not an $(l'+t\kappa_p)$-step common prey of some vertex in $U_{j_1}^{(p)}$ and some vertex in $U_{j_2}^{(p)}$. That is, for any positive integer $N$, there exists an integer $s \ge N$ such that some vertex in $U_{j_1}^{(p)}$ and some vertex in $U_{j_2}^{(p)}$  are not adjacent in $C(D^{l'+s\kappa_p})$. However, we have shown the existence of $N_{i_1}$ such that every vertex in  $U_{j_1}^{(p)}$ and every vertex in $U_{j_2}^{(p)}$ are adjacent in $C(D^{l+t\kappa_p})$ for any integer $t \ge N_{i_1}$. Hence we can conclude that  $\{C(D^m)\}_{m=1}^{\infty}$ diverges.
\end{proof}

 For a matrix $A$ in the form given in (\ref{form}), $p, q=1$, $\ldots$, $\eta$; $i=1$, $\ldots$, $\kappa_p$;  $j =1$, $\ldots$, $\kappa_{p+1}$, we let
\begin{align*}
\parbox[h]{13.5cm}{$A^{(pq)}_{ij}$ denote the submatrix of $A$ induced by the rows of $A$ intersecting $A_{i,i+1}^{(pp)}$ and the columns of $A$ intersecting $A_{j-1,j}^{(qq)}$, where $A^{(pp)}_{\kappa_p,\kappa_p+1}$ and $A_{0,1}^{(qq)}$ mean $A^{(pp)}_{\kappa_p,1}$ and $A_{\kappa_{q},1}^{(qq)}$, resepctively.}\tag{$\S$}
\end{align*}
\noindent We note that $A_{ij}^{(pq)}$ is a zero matrix if $|p-q| \ge 2$.

Suppose that, for $p \le \eta-1$, $A_{pp}$ is the last diagonal block of order at least two in $A$. Then the set \[\Lambda(D)=\left\{i \mid U^{(p)}_i \cap N_D^-(v) \neq \emptyset\right\}\] used in  Theorem~\ref{thm:trivial} corresponds to the set
\begin{align*}
\Lambda(A):=&  \{i \mid \mbox{The column of $A$ intersecting the trivial block } A_{p+1,p+1} \\
& \ \ \ \  \mbox{and a  row of } A \mbox{ intersecting } A^{(pp)}_{i,i+1}  \mbox{ meet at } 1.\}
\end{align*}
Furthermore $L_{j\to v}$ used in the same theorem corresponds to
\[L_{j\to(p+1)}:=\{(k_r-j)+1\!\!\!\!\!\pmod{\kappa_p} \mid r=1, 2, \ldots, s\}\]
where $\Lambda(A)=\{k_1, k_2, \ldots, k_s\}$.

Now we are ready to translate Theorem~\ref{thm:trivial} into matrix version.
\begin{Cor}
Suppose that $A \in \BBB_n$ is a matrix in the form given in (\ref{form}). Then $\{\Gamma(A^m)\}_{m=1}^\infty$ converges if and only if one of the following holds:
\begin{itemize}
\item[(i)] For each $i=1$, $\ldots$, $\eta$, $A_{ii}$ is of order one.
\item[(ii)] $A_{\eta\eta}$ is of order at least two.
\item[(iii)] $A_{\eta\eta}$ is of order one whereas there is a diagonal block of order at least two of $PAP^T$ for a permutation matrix $P$, and if $p$ is the largest index such that $A_{pp}$ is of order at least two, then $\bigcup_{i=0}^{\eta-p-1} ((i+L_{j_1 \to (p+1)}) \cap (i+L_{j_2 \to (p+1)})) = \emptyset$ or $\ZZ_{\kappa_p}$ for any $j_1$, $j_2$ in $\ZZ_{\kappa_p}$ where $\Lambda(A)=\{k_1,\ldots,k_s\}$ for some integer $s$, $1 \le s \le \kappa_p$. 
\end{itemize}
\end{Cor}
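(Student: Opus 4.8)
The plan is to reduce the statement to Theorem~\ref{thm:trivial} by means of the adjacency-matrix/competition-graph dictionary set up in the introduction, so that essentially no new combinatorics is needed; the work lies entirely in checking that the matrix-theoretic data in the corollary translate verbatim into the digraph-theoretic data of the theorem.

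First I would record the basic correspondence. Write $D=D(A)$. For every $m$ the $(u,w)$ entry of $A^m$ is $1$ precisely when there is a directed walk of length $m$ from $u$ to $w$ in $D$, so $D(A^m)=D^m$; since $\Gamma(B)$ is the adjacency matrix of the competition graph of $D(B)$ for every $B\in\BBB_n$, it follows that $\Gamma(A^m)$ is the adjacency matrix of $C(D^m)$. Because the vertex set is fixed and the graphs are labeled, two such matrices are equal if and only if the corresponding graphs are equal; hence, directly from the definition of convergence, $\{\Gamma(A^m)\}_{m=1}^\infty$ converges if and only if $\{C(D^m)\}_{m=1}^\infty$ does. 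This lets me invoke Theorem~\ref{thm:trivial}, provided $D$ is linearly connected, which it is: form~(\ref{form}) is exactly the block form of the adjacency matrix of a digraph with strong components $D_1,\ldots,D_\eta$, where $D_i=D(A_{ii})$, and whose arcs pass only from $D_i$ to $D_{i+1}$.

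Next I would match the hypotheses componentwise. A strong component $D_i$ is nontrivial (contains a directed cycle) if and only if its irreducible block $A_{ii}$ has order at least two, since an order-one block is the $1\times 1$ zero matrix (loops being excluded), giving a single vertex with no arc. Thus condition (i) of the corollary (every $A_{ii}$ of order one) is condition (i) of the theorem (every $D_i$ trivial), and (ii) ($A_{\eta\eta}$ of order at least two) is (ii) ($D_\eta$ nontrivial). In case (iii) the largest index $p$ with $A_{pp}$ of order at least two is exactly the largest index with $D_p$ nontrivial, and since $A_{\eta\eta}$ has order one, the blocks $A_{p+1,p+1},\ldots,A_{\eta\eta}$ all have order one, so $D_{p+1},\ldots,D_\eta$ are all trivial. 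In particular $D_{p+1}$ is a trivial component with a single vertex $v$, which is precisely the configuration under which $\Lambda(D)$, $L_{j\to v}$, and property $(\ast)$ were defined.

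The one genuine verification, and the step I expect to be the main obstacle, is showing that the combinatorial sets in (iii) of the corollary coincide with those in (iii) of the theorem, that is, $\Lambda(A)=\Lambda(D)$ and consequently $L_{j\to(p+1)}=L_{j\to v}$ for every $j\in\ZZ_{\kappa_p}$. Here I must unwind the cyclic normal form of $A_{pp}$: reading off its block structure, the rows of $A$ intersecting the superdiagonal block $A_{i,i+1}^{(pp)}$ (with $A_{\kappa_p,\kappa_p+1}^{(pp)}$ read as $A_{\kappa_p,1}^{(pp)}$) are exactly the vertices of the $i$th set of imprimitivity $U_i^{(p)}$ of $D_p$, while the column of $A$ meeting the trivial block $A_{p+1,p+1}$ is the column of $v$. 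Hence $i\in\Lambda(A)$ if and only if some vertex of $U_i^{(p)}$ has a $1$ in the $v$-column, i.e.\ if and only if $U_i^{(p)}\cap N_D^-(v)\neq\emptyset$, which is exactly $i\in\Lambda(D)$. Since the two index sets $\Lambda$ agree, the defining formulas give $L_{j\to(p+1)}=L_{j\to v}$, and therefore $i+L_{j\to(p+1)}=i+L_{j\to v}$ for all $i$; the union condition in (iii) of the corollary is then word for word the union condition in (iii) of the theorem. With the dictionary in place, the corollary follows at once by applying Theorem~\ref{thm:trivial} to $D$. The care required is purely in pinning down the indexing convention so that the block rows of $A_{pp}$ match the sets of imprimitivity in the correct cyclic order; once that is fixed, nothing further is needed.
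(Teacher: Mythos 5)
Your proposal is correct and follows essentially the same route as the paper: the paper likewise treats this corollary as a direct translation of Theorem~\ref{thm:trivial}, devoting its preceding paragraph to exactly the dictionary you verify ($D_i$ nontrivial iff $A_{ii}$ has order at least two, $\Lambda(A)=\Lambda(D)$ via the rows meeting $A^{(pp)}_{i,i+1}$ being $U_i^{(p)}$, and hence $L_{j\to(p+1)}=L_{j\to v}$). Your write-up just makes explicit the steps the paper leaves implicit, including the observation that simplicity of the digraph forces an order-one block to be zero.
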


\section{The limit of $\{\Gamma(A^m)\}_{m=1}^\infty$}
In this section, we find the limit of $\{\Gamma(A^m)\}_{m=1}^\infty$ for a matrix in the form given in (\ref{form}) when $A_{11}$, $A_{22}$, $\ldots$, $A_{\eta\eta}$ are of order at least two. The convergence of $\{\Gamma(A^m)\}_{m=1}^\infty$ for such a matrix $A$ is guaranteed by Theorem~\ref{thm:nontrivial} or Theorem~\ref{thm:trivial}.

To characterize the limit of $\{\Gamma(A^m)\}_{m=1}^\infty$ for a matrix $A \in \mathcal{B}_n$ whose digraph  has exactly two strong components and for which  $\{\Gamma(A^m)\}_{m=1}^\infty$ converges, Park~{\em et al.}~\cite{ppk} introduced the notion $B_D$ for a weakly connected digraph $D$ with exactly two strong components $D_1$ and $D_2$ where $D_2$ is nontrivial.
\begin{Def}[\cite{ppk}]\label{def:bipartite}
We take a weakly connected digraph $D$ with exactly two strong components $D_1$ and $D_2$ where $D_2$ is nontrivial. Let $I(D)=\{(k,l) \mid (x,y)\in A(D) \mbox{ for some }x\in U^{(1)}_k, y\in U^{(2)}_l \}$.
Let $B_D=\left(\mathbb{Z}_{\kappa(D_1)},\mathbb{Z}_{\kappa(D_2)}\right)$ be the bipartite graph defined as follows.
If $D_1$ is nontrivial, then $B_D$ has an edge $(i,j)$ if and only if $i \equiv k+1+p \pmod{\kappa(D_1)}$ and $j \equiv l+p \pmod{\kappa(D_2)}$ for some $(k,l) \in I(D)$ and some integer $p$.
If $D_1$ is trivial, then $B_D$ has an edge $(i,j)$ if and only if $j \equiv l-1 \pmod{\kappa(D_2)}$ for some $(1,l) \in I(D)$, which is obtained by substituting $p=-1$ and  $k(D_1)=1$ in the nontrivial case.
\label{def}
\end{Def}
\noindent They described the limit of $\{C(D^m)\}_{m=1}^{\infty}$ using a notion of  `expansion' of $B_D$.
\begin{Def}[\cite{ppk}]\label{def:expansion}
Given a bipartite graph $B=(X,Y)$, we construct a supergraph of $B$ as follows. We write each edge of $B$ in the arc form $(x,y)$ to make clear that $x\in X$ and $y\in Y$. Then we replace  each vertex $v$ with a complete graph $G_v$ (of any size) so that $G_v$ and $G_w$ are vertex-disjoint if $v \neq w$, and join each vertex of $G_x$ and each vertex of $G_y$ whenever either $(x,y)$ is an edge of $B$ or there exists $z \in Y$ such that $(x,z)$ and $(y,z)$ are edges of $B$. We say that the resulting graph is an {\em expansion  of $B$}.
\end{Def}

\begin{Thm}[\cite{ppk}] \label{thm:main2}
Let $D$ be a weakly connected digraph with two strong components $D_1$ and $D_2$ such that no arc goes from $D_2$ to $D_1$, $D_2$ is nontrivial, and $\{C(D^m)\}_{m=1}^{\infty}$  converges.
Then the limit of  $\{C(D^m)\}_{m=1}^{\infty}$ is an expansion of the bipartite graph $B_D$  defined in Definition~\ref{def}.
\end{Thm}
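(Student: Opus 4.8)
The plan is to prove that, once $m$ is large enough for $\{C(D^m)\}$ to have stabilized, whether two vertices are adjacent in $C(D^m)$ depends only on the sets of imprimitivity containing them, and then to check that the resulting graph obeys exactly the join rules of Definition~\ref{def:expansion} when each complete factor is taken to be a set of imprimitivity. Write $\kappa_1=\kappa(D_1)$ and $\kappa_2=\kappa(D_2)$, with sets of imprimitivity $U_1^{(1)},\dots,U_{\kappa_1}^{(1)}$ and $U_1^{(2)},\dots,U_{\kappa_2}^{(2)}$ (all nonempty). Every vertex of $D$ has an out-neighbor, so convergence is in any case guaranteed by Theorem~\ref{thm:nontrivial}, and I may reason with a single large $m$. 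The workhorse is Lemma~\ref{brualdi1}: for all sufficiently large $m$, a vertex of $U_r^{(t)}$ reaches inside its own component precisely every vertex of $U_{r+m}^{(t)}$ (indices mod $\kappa_t$). Hence $m$-step reachability is uniform on each set of imprimitivity, which is what will turn adjacencies into \emph{full} joins between sets and so produce an expansion.

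I then split pairs by type. Two $D_2$-vertices in $U_a^{(2)}$ and $U_b^{(2)}$ have $m$-step prey sets $U_{a+m}^{(2)}$ and $U_{b+m}^{(2)}$, which meet iff $a\equiv b\pmod{\kappa_2}$; thus each $U_j^{(2)}$ is a clique and distinct sets are non-adjacent, and the identical argument inside $D_1$ makes each $U_i^{(1)}$ a clique. The central object is the set of $D_2$-indices reachable from $u\in U_i^{(1)}$ in $m$ steps. Decomposing a length-$m$ walk from $u$ into $D_2$ as a walk inside $D_1$ from $U_i^{(1)}$ to some $U_k^{(1)}$, a crossing arc into $U_l^{(2)}$ (so $(k,l)\in I(D)$), and a walk inside $D_2$, and applying Lemma~\ref{brualdi1} to each piece, a short computation gives that for large $m$ this index set is $\bigcup_{(k,l)\in I(D)}\{\,j : j\equiv i-k+l+m-1 \ (\mathrm{mod}\ \gcd(\kappa_1,\kappa_2))\,\}$ when $D_1$ is nontrivial --- the freedom in the length of the $D_1$-part sweeps out an entire coset of $\gcd(\kappa_1,\kappa_2)$ --- while when $D_1$ is trivial no such freedom exists and the set is just $\{\, l+m-1 \ (\mathrm{mod}\ \kappa_2) : (1,l)\in I(D)\,\}$.

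Now I match to $B_D$. A vertex $z\in U_j^{(2)}$ has $m$-step prey set $U_{j+m}^{(2)}$, and since no arc leaves $D_2$, any common prey of $u\in U_i^{(1)}$ and $z$ lies in $D_2$; it exists iff $j+m$ belongs to the index set above, and upon substituting, the $m$ cancels, leaving $i-k-1\equiv j-l\pmod{\gcd(\kappa_1,\kappa_2)}$ for some $(k,l)\in I(D)$ in the nontrivial case and $j\equiv l-1\pmod{\kappa_2}$ in the trivial case. By the standard solvability criterion for the pair of congruences $p\equiv i-k-1\ (\mathrm{mod}\ \kappa_1)$, $p\equiv j-l\ (\mathrm{mod}\ \kappa_2)$, the first is exactly the existence of the integer $p$ of Definition~\ref{def}, i.e.\ that $(i,j)$ is an edge of $B_D$, and the second matches the trivial clause verbatim; by Corollary~\ref{cor:lemma2} such a common prey indeed forces $u$ and $z$ to be adjacent in the limit. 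For two $D_1$-vertices $u\in U_i^{(1)}$, $v\in U_{i'}^{(1)}$ with $i\ne i'$, a common prey inside $D_1$ would force $i\equiv i'$ (already handled by the cliques), so adjacency comes from intersecting their two $D_2$-index sets; the $m$ cancels once more and the condition reduces to $(l-k)+i\equiv(l'-k')+i'\pmod{\gcd(\kappa_1,\kappa_2)}$ for some $(k,l),(k',l')\in I(D)$ --- precisely that $i$ and $i'$ have a common neighbour in $B_D$. Setting $G_i=U_i^{(1)}$ and $G_j=U_j^{(2)}$, these computations reproduce exactly the join rules of Definition~\ref{def:expansion}, identifying the limit as an expansion of $B_D$.

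The step I expect to be the main obstacle is the modular bookkeeping underlying the prey-index formula: one must verify that the length freedom inside a nontrivial $D_1$ sweeps out exactly one coset modulo $\gcd(\kappa_1,\kappa_2)$ once $m$ is large (neither less, which would drop edges, nor more), and that in each of the three comparisons the dependence on $m$ cancels identically --- this cancellation is simultaneously what makes the limit $m$-independent and what aligns the condition with the parameter $p$ of Definition~\ref{def}. The trivial-$D_1$ case must be run separately, since there the congruences are read modulo $\kappa_2$ rather than modulo the gcd, which is exactly the dichotomy already present in Definition~\ref{def}; and one must keep in mind that every set of imprimitivity is nonempty, so that Lemma~\ref{brualdi1} yields genuine full joins and the factors $G_v$ are nonempty complete graphs, as an expansion requires.
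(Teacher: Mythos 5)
Your proposal is correct, but note that the paper itself offers no proof of this statement --- it is quoted from Park~et~al.~\cite{ppk} --- so what you have really done is supply a self-contained argument where the present paper has only a citation. Your route also differs in flavour from the machinery this paper builds for the $\eta$-component generalization (Theorem~\ref{thm:main3}): there the authors normalize all walk lengths to multiples of $2\kappa_1\kappa_2\cdots\kappa_\eta$ (Lemmas~\ref{lemmanew} and~\ref{lem:claim 1}) and characterize step common prey via paths in the CS-graph, which hides the arithmetic inside the choice of a common period; you instead compute the $m$-step prey index set of a $D_1$-vertex explicitly as a union of cosets modulo $\gcd(\kappa_1,\kappa_2)$ and check that $m$ cancels in each adjacency test. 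Your computation is sound: the decomposition of a crossing walk into a $D_1$-segment, one crossing arc, and a $D_2$-segment, together with Lemma~\ref{Lem:brualdi}, does give the coset $i-k+l+m-1+\langle\gcd(\kappa_1,\kappa_2)\rangle$ for each $(k,l)\in I(D)$ (the two period parameters $a,b$ can be chosen independently large once $m$ is large, by the usual Frobenius/B\'ezout argument), the CRT criterion for the pair of congruences defining $p$ in Definition~\ref{def} is exactly $i-k-1\equiv j-l\pmod{\gcd(\kappa_1,\kappa_2)}$, and the three case analyses ($D_2$--$D_2$, $D_1$--$D_2$, $D_1$--$D_1$) reproduce precisely the join rules of Definition~\ref{def:expansion} with $G_v$ the sets of imprimitivity. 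What your approach buys is an explicit, $m$-independent formula for adjacency and a transparent reason the gcd appears; what the paper's approach buys is uniformity across many components, where tracking cosets of pairwise gcds would become unwieldy.
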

Given a linearly connected digraph $D$ with $\eta$ strong components $D_1$, $\ldots$, $D_\eta$, we recall that $D_{p,p+1}$ denotes the subdigraph of $D$ induced by $V(D_p) \cup V(D_{p+1})$ for each $p=1$, $\ldots$, $\eta-1$.
Noting that  $D_{p,p+1}$ of $D$ is a weakly connected digraph with two strong components, we extend  Definition~\ref{def:bipartite} as follows to take care of the general case given in (\ref{form}).
 \begin{Def}\label{def:partite} Let $D$ be a linearly connected digraph with $\eta$ nontrivial strong components and let
\[I(D_{p,p+1}):=\{(k,l) \mid (x,y)\in A(D) \mbox{ for some }x\in U^{(p)}_k \mbox{ and } y\in U^{(p+1)}_l \}.\] We define the {\em competition skeleton graph} (CS-graph for short)  of $D$ as the  $\eta$-partite graph $PT^{(\eta)}_D = (V_1, V_2, \ldots, V_\eta)$ with
$V_i = \ZZ_{\kappa_i}, 1 \le i \le \eta$ and an edge $(i,j)$ if and only if for $p \in \{1,\ldots, \eta-1\}$, $(i,j) \in E(B_{D_{p,p+1}})$. (See Figure~\ref{cs} for an illustration.)
\end{Def}
\begin{figure}
\psfrag{A}{$U_1^{(1)}$}\psfrag{B}{{$U_2^{(1)}$}}\psfrag{D}{{ $U_1^{(2)}$}}\psfrag{E}{{$U_2^{(2)}$}}\psfrag{F}{{ $U_3^{(2)}$}}\psfrag{G}{{ $U_4^{(2)}$}}\psfrag{H}{{$U_1^{(3)}$}}\psfrag{I}{{ $U_2^{(3)}$}}\psfrag{1}{{ $1$}}\psfrag{2}{{$2$}}\psfrag{3}{{$3$}}\psfrag{4}{{ $4$}}\psfrag{J}{$D$}\psfrag{K}{$PT_D^{(3)}$}
\begin{center}
\includegraphics[height=6.5cm]{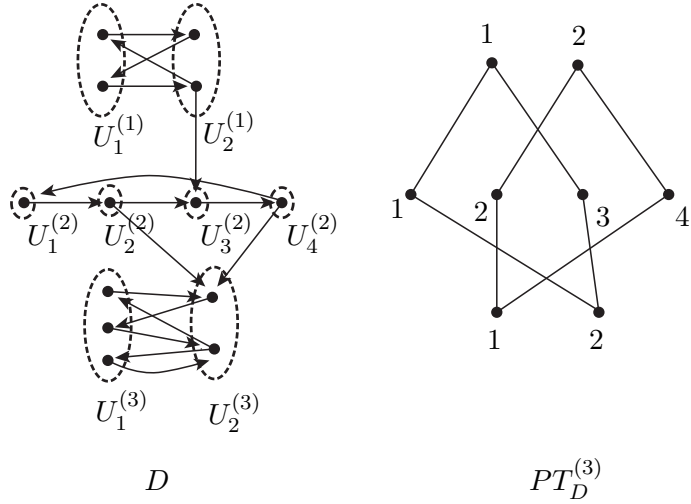}
\end{center}
\caption{A strongly connected digraph $D$ and its CS-graph $PT_D^{(3)}$.}
\label{cs}
\end{figure}
\noindent In the following, for convenience sake, we write $B_{p,p+1}$ for $B_{D_{p,p+1}}$. If $\{i,j\}$ is an edge of $B_{p,p+1}$ where $i \in \ZZ_{\kappa_p}$ and $j \in \ZZ_{\kappa_{p+1}}$, we denote it by $(i,j)$ instead of $\{i,j\}$.

 We state a few useful lemmas.
\begin{Lem}\label{lemmanew}
Let $D$ be a weakly connected digraph with exactly two strong components $D_1$ and $D_2$ both of which are nontrivial.  Then the following are equivalent:
\begin{itemize}
\item[(a)] There exists a directed $(u,v)$-walk of length $2s\kappa_1\kappa_2$ for some vertices $u\in U^{(1)}_i$ and $v\in U^{(2)}_j$ and for some positive integer $s$.
\item[(b)] For any vertices $x \in U^{(1)}_i$ and $y \in U^{(2)}_j$, there exists a positive integer $N(x,y)$ such that, for any $t \ge N(x,y)$, there exists a directed $(x,y)$-walk of length $2t\kappa_1 \kappa_2$.
\end{itemize}
\end{Lem}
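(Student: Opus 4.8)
The plan is to prove the nontrivial implication (a) $\Rightarrow$ (b); the reverse implication (b) $\Rightarrow$ (a) is immediate, since taking $x=u$, $y=v$ and $s=N(u,v)$ produces a directed $(u,v)$-walk of length $2N(u,v)\kappa_1\kappa_2$. So assume (a) and fix a directed $(u,v)$-walk $W$ of length $2s\kappa_1\kappa_2$ with $u\in U^{(1)}_i$ and $v\in U^{(2)}_j$. First I would exploit the component structure of $D$: because $D_1$ and $D_2$ are the only strong components and no arc goes from $D_2$ to $D_1$, the walk $W$ remains in $D_1$ until it crosses to $D_2$ along a single arc and thereafter stays in $D_2$. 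I would split $W$ at this unique crossing arc $(w,w')$, where $w\in U^{(1)}_k$ and $w'\in U^{(2)}_l$. This decomposes the length as $2s\kappa_1\kappa_2=\alpha_0+1+\beta_0$, where $\alpha_0$ is the length of the initial $(u,w)$-segment inside $D_1$ and $\beta_0$ is the length of the terminal $(w',v)$-segment inside $D_2$. Since the length of any walk inside a strong component between fixed sets of imprimitivity is congruent to the index difference, I may write $\alpha_0=(k-i)+a_0\kappa_1$ and $\beta_0=(j-l)+b_0\kappa_2$ for integers $a_0,b_0$.

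Next, for arbitrary $x\in U^{(1)}_i$ and $y\in U^{(2)}_j$, I would reassemble a walk routed through the very same crossing arc $(w,w')$. Applying Lemma~\ref{brualdi1} to $D_1$ yields an integer $N_1$ such that there are directed $(x,w)$-walks of every length $(k-i)+a\kappa_1$ with $a\ge N_1$; applying it to $D_2$ yields an integer $N_2$ such that there are directed $(w',y)$-walks of every length $(j-l)+b\kappa_2$ with $b\ge N_2$. Concatenating such a $D_1$-walk, the arc $(w,w')$, and such a $D_2$-walk gives a directed $(x,y)$-walk of length
\[
(k-i+a\kappa_1)+1+(j-l+b\kappa_2)=2s\kappa_1\kappa_2+(a-a_0)\kappa_1+(b-b_0)\kappa_2
\]
for all $a\ge N_1$ and $b\ge N_2$, where I have substituted $\alpha_0+1+\beta_0=2s\kappa_1\kappa_2$.

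Finally I would carry out the Diophantine bookkeeping that realizes the target length $2t\kappa_1\kappa_2$. It suffices to choose $a,b$ with $(a-a_0)\kappa_1+(b-b_0)\kappa_2=2(t-s)\kappa_1\kappa_2$. I would take $b=b_0+m_0\kappa_1$, where $m_0$ is the least nonnegative integer with $b_0+m_0\kappa_1\ge N_2$; this forces $a=a_0+\bigl(2(t-s)-m_0\bigr)\kappa_2$, which exceeds $N_1$ as soon as $t$ is large. Setting $N(x,y)$ equal to this threshold then gives, for every $t\ge N(x,y)$, a directed $(x,y)$-walk of length $2t\kappa_1\kappa_2$, which is exactly (b).

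The only real content lies in this last step, namely guaranteeing that the progression $\{2t\kappa_1\kappa_2\}_t$ is hit for all large $t$ with both exponents $a,b$ simultaneously in the admissible ranges of Brualdi's lemma. Hypothesis (a) is what makes this possible: it supplies a crossing arc of a compatible residue type $(k,l)$ and, crucially, the particular solution $\alpha_0+1+\beta_0=2s\kappa_1\kappa_2$, so that the offset $c=(k-i)+1+(j-l)$ is congruent to $0$ modulo $\gcd(\kappa_1,\kappa_2)$ and hence compatible with the multiples of $\kappa_1\kappa_2$. Without (a) there need be no crossing whose residues align with the progression, and the matching would fail.
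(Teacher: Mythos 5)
Your proof is correct, but it takes a genuinely different route from the paper's. The paper never touches the internal structure of the given walk $W$: it simply keeps $W$ intact and pads it at both ends, using Lemma~\ref{brualdi1} with $x,u$ in the \emph{same} set of imprimitivity $U^{(1)}_i$ (and $v,y$ in the same $U^{(2)}_j$) to produce an $(x,u)$-walk of length $2s_1\kappa_1\kappa_2$ and a $(v,y)$-walk of length $2s_2\kappa_1\kappa_2$; concatenating gives length $2(s+s_1+s_2)\kappa_1\kappa_2$, and every $t\ge s+N_1+N_2$ is of the form $s+s_1+s_2$. Because every increment is already a multiple of $2\kappa_1\kappa_2$, no Diophantine matching is needed. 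You instead split $W$ at the unique crossing arc $(w,w')$, reattach arbitrary $(x,w)$- and $(w',y)$-segments, and solve $(a-a_0)\kappa_1+(b-b_0)\kappa_2=2(t-s)\kappa_1\kappa_2$ using the particular solution supplied by $W$; your choice $b=b_0+m_0\kappa_1$, $a=a_0+(2(t-s)-m_0)\kappa_2$ is valid and lands both parameters in the admissible ranges for large $t$. Your argument yields somewhat more (a description of all achievable lengths through that crossing arc, close in spirit to how $I(D)$ and $B_D$ are defined), at the cost of the crossing-arc decomposition and the bookkeeping; the paper's padding trick is shorter and makes the role of hypothesis (a) transparent, since the existing walk is reused verbatim as the middle section. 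Your closing remark about the offset being $0$ modulo $\gcd(\kappa_1,\kappa_2)$ is not actually needed once you have the particular solution, so it could be dropped.
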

\begin{proof} To show (a) implies (b), suppose that there exists a directed $(u,v)$-walk of length $2s\kappa_1\kappa_2$ for some vertices $u\in U^{(1)}_i$ and $v\in U^{(2)}_j$ and for some positive integer $s$. Take any vertices $x \in U^{(1)}_i$ and $y \in U^{(2)}_j$. Then, by Lemma~\ref{brualdi1}, there exist positive integers $N_1$ and $N_2$ such that there are directed $(x,u)$-walks in $D_1$ of every length $s_1\kappa_1$ with $s_1 \ge N_1$ and there are directed $(v,y)$-walks in $D_2$ of every length $s_2\kappa_2$ with $s_2 \ge N_2$.
Let $N = s + N_1 + N_2$. Fix any $t \ge N$ and choose two positive integers $s_1 \ge N_1, s_2 \ge N_2$ satisfying $t = s + s_1 + s_2$. Then  there exist a directed $(x,u)$-walk $Q$ of length $2s_1\kappa_1 \kappa_2$ and a directed $(v,y)$-walk $R$ of length $2s_2\kappa_1 \kappa_2$. Thus we obtain a directed $(x,y)$-walk $QPR$ of length $2(s + s_1 + s_2)\kappa_1 \kappa_2 = 2t\kappa_1 \kappa_2$.

The statement (a) immediately follows from (b) by taking any vertices $u\in U^{(1)}_i$ and $v\in U^{(2)}_j$ and $s=N(u,v)$.
\end{proof}
\noindent By Lemma~\ref{lemmanew}, the following two lemmas are logically equivalent:
\begin{Lem}[\cite{ppk}] \label{lem:claim 1}
Let $D$ be a weakly connected digraph with exactly two strong components $D_1$ and $D_2$ both of which are nontrivial.  Then $(i,j)$ is an edge of $B_D$ if and only if there exists a directed $(u,v)$-walk of length $2s\kappa_1\kappa_2$ for some vertices $u\in U^{(1)}_i$ and $v\in U^{(2)}_j$ and for some positive integer $s$.
\end{Lem}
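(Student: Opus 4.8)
The plan is to prove both implications by analysing the unique point at which a directed $(u,v)$-walk crosses from $D_1$ to $D_2$, and then matching the resulting length bookkeeping against the congruences defining an edge of $B_D$. Throughout I use the standard fact about sets of imprimitivity: inside a nontrivial strong component with index of imprimitivity $\kappa$, every arc runs from $U_a$ to $U_{a+1}$ (subscripts mod $\kappa$), so a directed walk inside the component from $U_a$ to $U_b$ has length congruent to $b-a \pmod{\kappa}$. Since $D$ has exactly the two strong components $D_1,D_2$ with every arc directed from $D_1$ to $D_2$, any directed $(u,v)$-walk with $u\in V(D_1)$ and $v\in V(D_2)$ splits uniquely as a $(u,x)$-walk inside $D_1$, a single crossing arc $(x,y)\in A(D)$ with $x\in U^{(1)}_k$ and $y\in U^{(2)}_l$ (so that $(k,l)\in I(D)$), and a $(y,v)$-walk inside $D_2$.

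For the forward implication, suppose such a walk has length $2s\kappa_1\kappa_2$, and write $\ell_1$ and $\ell_2$ for the lengths of its $D_1$-part and its $D_2$-part, so that $\ell_1+1+\ell_2 = 2s\kappa_1\kappa_2$. By the remark above, $\ell_1 \equiv k-i \pmod{\kappa_1}$ and $\ell_2 \equiv j-l \pmod{\kappa_2}$. I would then exhibit the required integer $p$ explicitly by setting $p=-\ell_1-1$: the first congruence gives $k+1+p \equiv i \pmod{\kappa_1}$ directly, and since $\ell_1+1+\ell_2 \equiv 0 \pmod{\kappa_2}$ one gets $p \equiv \ell_2 \equiv j-l \pmod{\kappa_2}$, hence $l+p\equiv j \pmod{\kappa_2}$. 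Thus $(i,j)\in E(B_D)$.

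For the reverse implication, suppose $(i,j)\in E(B_D)$, witnessed by $(k,l)\in I(D)$ and an integer $p$, and fix an arc $(x,y)\in A(D)$ with $x\in U^{(1)}_k$, $y\in U^{(2)}_l$. Fix any $u\in U^{(1)}_i$ and $v\in U^{(2)}_j$. Applying Lemma~\ref{brualdi1} in $D_1$ and in $D_2$ separately yields thresholds $N_1,N_2$ so that there is a $(u,x)$-walk in $D_1$ of length $(k-i)+t_1\kappa_1$ for every $t_1\ge N_1$ and a $(y,v)$-walk in $D_2$ of length $(j-l)+t_2\kappa_2$ for every $t_2\ge N_2$. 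Concatenating through the arc $(x,y)$ produces a $(u,v)$-walk of length $C+t_1\kappa_1+t_2\kappa_2$, where $C=(k-i)+(j-l)+1$, and it remains to choose $t_1\ge N_1$, $t_2\ge N_2$ and a positive integer $s$ with $t_1\kappa_1+t_2\kappa_2 = 2s\kappa_1\kappa_2 - C$.

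The crux, and the step I expect to be most delicate, is this last Diophantine matching. The existence of $p$ forces $C\equiv 0 \pmod{\gcd(\kappa_1,\kappa_2)}$: indeed $k-i\equiv -1-p$ and $j-l\equiv p$ modulo $g:=\gcd(\kappa_1,\kappa_2)$, so $C\equiv(-1-p)+p+1=0 \pmod g$, and consequently $2s\kappa_1\kappa_2-C$ is always a multiple of $g$. Since the set $\{t_1\kappa_1+t_2\kappa_2 : t_1\ge N_1,\ t_2\ge N_2\}$ contains every sufficiently large multiple of $g$ (a numerical-semigroup argument applied to $\kappa_1/g$ and $\kappa_2/g$, shifted by $N_1\kappa_1+N_2\kappa_2$), taking $s$ large enough makes $2s\kappa_1\kappa_2-C$ a large multiple of $g$ lying in this set, which yields admissible $t_1,t_2$. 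This produces a walk of length exactly $2s\kappa_1\kappa_2$ and completes the equivalence. I would finally note that, by Lemma~\ref{lemmanew}, this single-walk formulation is interchangeable with the ``for all pairs'' formulation used in the sequel.
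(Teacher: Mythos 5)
Your proof is correct, but it is worth noting that the paper does not actually prove Lemma~\ref{lem:claim 1} at all: the lemma is imported from \cite{ppk} as a black box, and the only work the paper does in its vicinity is Lemma~\ref{lemmanew}, which establishes that the single-walk formulation is interchangeable with the all-pairs formulation (your closing remark). So you have supplied a self-contained argument where the paper supplies a citation. Your argument itself is sound. The decomposition of a $(u,v)$-walk at the unique crossing arc is legitimate because, with the paper's convention that every arc between components goes from $D_1$ to $D_2$, the walk cannot re-enter $D_1$; the congruences $\ell_1\equiv k-i\pmod{\kappa_1}$ and $\ell_2\equiv j-l\pmod{\kappa_2}$ are consistent with Lemma~\ref{brualdi1}, and your explicit witness $p=-\ell_1-1$ does satisfy both congruences in Definition~\ref{def:bipartite} simultaneously, which is what the definition requires. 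In the converse direction you correctly isolate the one nontrivial step: the total length $C+t_1\kappa_1+t_2\kappa_2$ must be steered onto the arithmetic progression $2s\kappa_1\kappa_2$, and the observation that the existence of $p$ forces $C\equiv 0\pmod{\gcd(\kappa_1,\kappa_2)}$ is exactly what makes the numerical-semigroup argument (all sufficiently large multiples of $\gcd(\kappa_1,\kappa_2)$ are representable as $t_1\kappa_1+t_2\kappa_2$ with $t_1\ge N_1$, $t_2\ge N_2$) close the gap. This is presumably close in spirit to the proof in \cite{ppk}, but as far as this paper is concerned your write-up fills in a proof the authors chose not to reproduce.
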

\begin{Lem} \label{lemma1}
Let $D$ be a weakly connected digraph with exactly two strong components $D_1$ and $D_2$ both of which are nontrivial.  Then $(i,j)$ is an edge of $B_D$ if and only if, for any vertices $x \in U^{(1)}_i$ and $y \in U^{(2)}_j$, there exists a positive integer $N(x,y)$ such that, for any $t \ge N(x,y)$, there exists a directed $(x,y)$-walk of length $2t\kappa_1 \kappa_2$.
\end{Lem}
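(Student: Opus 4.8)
The plan is to derive Lemma~\ref{lemma1} directly by chaining Lemma~\ref{lem:claim 1} with the equivalence established in Lemma~\ref{lemmanew}, since no new combinatorial content is required. The first thing I would do is record the verbal identification of the two side conditions. The right-hand condition in Lemma~\ref{lem:claim 1}---the existence of a directed $(u,v)$-walk of length $2s\kappa_1\kappa_2$ for \emph{some} $u \in U^{(1)}_i$, \emph{some} $v \in U^{(2)}_j$, and \emph{some} positive integer $s$---is word-for-word statement~(a) of Lemma~\ref{lemmanew}. Likewise, the right-hand condition of the present lemma---that \emph{every} pair $x \in U^{(1)}_i$, $y \in U^{(2)}_j$ admits, for all sufficiently large $t$, a directed $(x,y)$-walk of length $2t\kappa_1\kappa_2$---is verbatim statement~(b) of Lemma~\ref{lemmanew}.

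With this identification in hand, the argument is a two-step composition of biconditionals. First I would invoke Lemma~\ref{lem:claim 1} to get that $(i,j)$ is an edge of $B_D$ if and only if (a) holds. Then I would apply Lemma~\ref{lemmanew}, which asserts the equivalence of (a) and (b), to replace (a) by (b). Composing the two equivalences yields that $(i,j)$ is an edge of $B_D$ if and only if (b) holds, which is exactly the assertion of Lemma~\ref{lemma1}.

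There is essentially no obstacle at this stage, because all of the genuine work has been discharged upstream. The delicate direction is (a)~$\Rightarrow$~(b) inside Lemma~\ref{lemmanew}: upgrading a single walk of length $2s\kappa_1\kappa_2$ between \emph{some} admissible pair of vertices to walks of every large length $2t\kappa_1\kappa_2$ between \emph{every} admissible pair. That step relies on Lemma~\ref{brualdi1} to produce $(x,u)$-walks in $D_1$ of all large lengths that are multiples of $\kappa_1$ and $(v,y)$-walks in $D_2$ of all large lengths that are multiples of $\kappa_2$, and then concatenates them around the fixed middle walk, balancing the length bookkeeping via $t = s + s_1 + s_2$ so that the total is $2t\kappa_1\kappa_2$. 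Since Lemma~\ref{lemmanew} is already proved and may be quoted, the proof of Lemma~\ref{lemma1} reduces to recording the above chain of equivalences, and I would write it in a single sentence of the form ``Immediate from Lemma~\ref{lem:claim 1} and Lemma~\ref{lemmanew}.''
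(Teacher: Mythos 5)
Your proposal is correct and is exactly what the paper does: the paper introduces Lemma~\ref{lemmanew} precisely so that it can declare Lemma~\ref{lem:claim 1} and Lemma~\ref{lemma1} ``logically equivalent'' and thereby obtain the latter from the former with no further argument. Your identification of the two side conditions with statements (a) and (b) of Lemma~\ref{lemmanew} matches the paper's intent word for word.
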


We shall generalize Lemma~\ref{lem:claim 1}. To do so, we need the following lemma.
\begin{Lem}\label{new}
Let $D$ be a linearly connected digraph with only nontrivial strong components as many as $\eta \ge 3$. Let $x \in U_i^{(p)}$ and $z \in U_k^{(q)}$ for positive integers $p$ and $q$ satisfying $p+2 \le q \le \eta$. If there exists a directed $(x,z)$-walk of length $s\kappa_{p+1}\kappa_{p+2}\cdots\kappa_q$ for some positive integer $s$, then, for some $j \in \ZZ_{\kappa_{p+1}}$ and for some  $y \in U_{j}^{(p+1)}$, there exist
\begin{itemize}
\item[(i)]  a directed $(x,y)$-walk of length $k\kappa_{p+1}$ for each $k$ greater than or equal to some positive integer $K$, in particular, a directed $(x,y)$-walk of length $2K\kappa_{p}\kappa_{p+1}$;
\item[(ii)] a directed $(y,z)$-walk of length $k'\kappa_{p+1}$ for each $k'$ greater than or equal to some positive integer $K'$, in particular,  directed $(y,z)$-walks of lengths $2K'\kappa_{p+1}\kappa_{p+2}$ and $K'\kappa_{p+1}\kappa_{p+2}\cdots\kappa_q$, respectively.
\end{itemize}
\end{Lem}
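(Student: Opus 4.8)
The plan is to exploit the fact that, because $D$ is linearly connected, every directed walk from $x\in U_i^{(p)}$ to $z\in U_k^{(q)}$ must pass through the intermediate component $D_{p+1}$, and then to use Lemma~\ref{brualdi1} inside $D_{p+1}$ to adjust the lengths of the two resulting pieces so that each becomes a multiple of $\kappa_{p+1}$. First I would fix a directed $(x,z)$-walk $W$ of length $L=s\kappa_{p+1}\kappa_{p+2}\cdots\kappa_q$, which exists by hypothesis, and let $y_0$ be the first vertex of $W$ lying in $D_{p+1}$; since each arc of $D$ goes only from $D_t$ to $D_{t+1}$ and $q\ge p+2$, such a vertex exists. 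Writing $y_0\in U_{j_0}^{(p+1)}$ and splitting $W$ at $y_0$ yields a directed $(x,y_0)$-walk $W_1$ of some length $\ell_1$ and a directed $(y_0,z)$-walk $W_2$ of length $\ell_2=L-\ell_1$. The key numerical observation is that $L\equiv 0\pmod{\kappa_{p+1}}$, so $\ell_1\equiv -\ell_2\pmod{\kappa_{p+1}}$.

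Next I would choose the index $j\in\ZZ_{\kappa_{p+1}}$ with $j\equiv j_0-\ell_1\pmod{\kappa_{p+1}}$ and take $y$ to be any vertex of $U_j^{(p+1)}$; this single $y$ serves both (i) and (ii). For (i), I would concatenate $W_1$ with a directed $(y_0,y)$-walk inside $D_{p+1}$. By Lemma~\ref{brualdi1} such walks exist of every length congruent to $j-j_0$ modulo $\kappa_{p+1}$ once the length is large, so the resulting $(x,y)$-walk has length $\equiv \ell_1+(j-j_0)\equiv 0\pmod{\kappa_{p+1}}$; as the internal piece ranges over an arithmetic progression of common difference $\kappa_{p+1}$, we obtain $(x,y)$-walks of every length $k\kappa_{p+1}$ with $k\ge K$ for a suitable $K$. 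For (ii), I would instead prepend a directed $(y,y_0)$-walk inside $D_{p+1}$ (of length $\equiv j_0-j\equiv \ell_1$) to $W_2$; the total length is then $\equiv \ell_1+\ell_2=L\equiv 0\pmod{\kappa_{p+1}}$, yielding $(y,z)$-walks of every length $k'\kappa_{p+1}$ with $k'\ge K'$. The displayed special lengths follow immediately by specialization: take $k=2K\kappa_p$ in (i), and take $k'=2K'\kappa_{p+2}$ and $k'=K'\kappa_{p+2}\cdots\kappa_q$ in (ii), each of which is at least $K$ (resp.\ $K'$) since all the indices of imprimitivity are positive.

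The only real subtlety, and the main obstacle, is arranging that both pieces be simultaneously divisible by $\kappa_{p+1}$ using a \emph{single} vertex $y$. This is precisely what forces the choice $j\equiv j_0-\ell_1$, and it succeeds exactly because the hypothesis supplies a total length $L$ divisible by $\kappa_{p+1}$: once the residue of the first piece is fixed, the residue of the second is automatically pinned down. Everything else is bookkeeping with Lemma~\ref{brualdi1}, which applies since $D_{p+1}$ is nontrivial and strongly connected and thus guarantees the required internal walks between the prescribed sets of imprimitivity $U_{j_0}^{(p+1)}$ and $U_j^{(p+1)}$.

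I expect no hidden difficulty beyond this residue-matching step; in particular, the chosen special lengths $2K\kappa_p\kappa_{p+1}$ and $2K'\kappa_{p+1}\kappa_{p+2}$ are deliberately of the form $2s\kappa_a\kappa_{a+1}$ appearing in Lemma~\ref{lem:claim 1}, which signals that this lemma is the inductive bridge for extending the two-component edge characterization to the $(x,z)$-connection across three or more nontrivial components.
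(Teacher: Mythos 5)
Your proposal is correct and takes essentially the same route as the paper's own proof: split the walk $W$ at a vertex of $D_{p+1}$ lying in some $U_{j_1}^{(p+1)}$, choose $j\equiv j_1-\ell(W_1)\pmod{\kappa_{p+1}}$, and use Lemma~\ref{brualdi1} to splice internal walks of $D_{p+1}$ onto each piece so that both lengths become multiples of $\kappa_{p+1}$, with the displayed special lengths obtained by specializing $k$ and $k'$. The only cosmetic difference is that you fix the \emph{first} $D_{p+1}$-vertex of $W$ whereas the paper takes an arbitrary one; the residue bookkeeping is identical.
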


\begin{proof}
Let $W$ be a directed $(x,z)$-walk of length $s\kappa_{p+1}\kappa_{p+2}\cdots\kappa_q$. Since $D$ is linearly connected, there is a vertex $y_1$ on $W$ belonging to $D_{p+1}$. Then $y_1 \in U_{j_1}^{(p+1)}$ for some $j_1 \in \ZZ_{\kappa_{p+1}}$.  We denote by $W_1$ and $W_2$ the $(x,y_1)$-section of $W$ and $(y_1,z)$-section of $W$, respectively, and then, by $j$ the element in $\ZZ_{\kappa_{p+1}}$ satisfying
\[j \equiv j_1-\ell(W_1) \pmod{\kappa_{p+1}},\]
or
\begin{equation} \label{eq:new}
\ell(W_1) = j_1 - j+ t\kappa_{p+1}
\end{equation}  for some integer $t$.

Now we take a vertex in $U_{j}^{(p+1)}$ and call it  $y$.
 By Lemma~\ref{brualdi1}, there exists $L \in \NN$ such that, for any integer $k \ge L$, there exists a directed $(y_1,y)$-walk of $(j-j_1)+k\kappa_{p+1}$ and so a directed $(x,y)$-walk of length $\ell(W_1)+(j-j_1)+k\kappa_{p+1}$, which equals $(t+k)\kappa_{p+1}$ by (\ref{eq:new}). We let $K= \max\{ L, L+t \}$ and (i) follows.

As $W$ is decomposed into $W_1$ and $W_2$,
\[s\kappa_{p+1}\kappa_{p+2}\cdots \kappa_q = \ell(W_1) +\ell(W_2)
\]
By Lemma~\ref{brualdi1} again, there exists $L' \in \NN$ such that, for each integer $k' \ge L'$, there exists a directed $(y,y_1)$-walk of length $(j_1-j) + k'\kappa_{p+1}$ and so a  directed $(y,z)$-walk of  length $(j_1- j)+ k'\kappa_{p+1} + \ell(W_2)$. Then for each integer $k' \ge L'$,
\begin{align*}
	&(j_1- j) + k'\kappa_{p+1} + \ell(W_2) \\
	= \ &(j_1- j) + k'\kappa_{p+1} + (s\kappa_{p+1}\kappa_{p+2}\cdots \kappa_q - \ell(W_1)) \\
= \ & k'\kappa_{p+1} + (s\kappa_{p+1}\kappa_{p+2}\cdots \kappa_q - t\kappa_{p+1}) \\
	= \ & (k' + s\kappa_{p+2} \cdots \kappa_q - t)\kappa_{p+1}.
\end{align*}
We let $K'=\max\{L',L' + s\kappa_{p+2} \cdots \kappa_q - t\}$, which satisfies (ii).
%
\end{proof}
\begin{Cor}\label{lemma4}
Let $D$ be a linearly connected digraph with only nontrivial strong components as many as $\eta \ge 3$ and let $x \in U_i^{(p)}$ and $z \in U_k^{(p+2)}$ for a positive integers $p$ satisfying $p+2 \le \eta$. If there exists a directed $(x,z)$-walk of length $s\kappa_{p+1} \kappa_{p+2}$ for some positive integer $s$, then
there exists $j \in \ZZ_{\kappa_{p+1}}$ such that $(i,j) \in E(B_{p,p+1})$ and $(j,k) \in E(B_{p+1,p+2})$.
\end{Cor}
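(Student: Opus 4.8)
The plan is to obtain the corollary directly by feeding the output of Lemma~\ref{new} into the walk-to-edge criterion of Lemma~\ref{lem:claim 1}. The role of Lemma~\ref{new} is to split the long $(x,z)$-walk at an intermediate vertex of $D_{p+1}$, while Lemma~\ref{lem:claim 1} is precisely the device that converts a walk of length $2s\kappa_a\kappa_b$ into an edge of the bipartite graph $B_{a,b}$; the corollary is essentially the concatenation of these two facts.

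First I would apply Lemma~\ref{new} with $q = p+2$, so that the hypothesized $(x,z)$-walk of length $s\kappa_{p+1}\kappa_{p+2}$ is exactly a walk of length $s\kappa_{p+1}\kappa_{p+2}\cdots\kappa_q$. The lemma then returns an index $j \in \ZZ_{\kappa_{p+1}}$ and a vertex $y \in U_j^{(p+1)}$ carrying a directed $(x,y)$-walk of length $2K\kappa_p\kappa_{p+1}$ by part (i) and a directed $(y,z)$-walk of length $2K'\kappa_{p+1}\kappa_{p+2}$ by part (ii). This $j$ is the index asserted by the corollary, and it remains only to verify that it produces the two claimed edges.

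Next I would check the containment that makes Lemma~\ref{lem:claim 1} applicable. Because $D$ is linearly connected, every arc runs from $D_l$ to $D_{l+1}$, so a walk beginning in $D_p$ and ending in $D_{p+1}$ cannot leave $V(D_p)\cup V(D_{p+1})$, and a walk from $D_{p+1}$ to $D_{p+2}$ stays inside $V(D_{p+1})\cup V(D_{p+2})$. Hence the $(x,y)$-walk is a walk of the two-component subdigraph $D_{p,p+1}$ and the $(y,z)$-walk is a walk of $D_{p+1,p+2}$, each of which is weakly connected with exactly two nontrivial strong components. Applying Lemma~\ref{lem:claim 1} to $D_{p,p+1}$ with $u=x \in U_i^{(p)}$, $v=y \in U_j^{(p+1)}$, and $s=K$ gives $(i,j)\in E(B_{p,p+1})$, and applying it to $D_{p+1,p+2}$ with $u=y$, $v=z \in U_k^{(p+2)}$, and $s=K'$ gives $(j,k)\in E(B_{p+1,p+2})$, which is exactly the conclusion.

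The only point requiring care—and hence what I would flag as the main obstacle—is precisely this containment of the two walks inside $D_{p,p+1}$ and $D_{p+1,p+2}$ respectively; without the linear-connectedness hypothesis the walks could meander through other components and Lemma~\ref{lem:claim 1} could not be invoked. Everything else is a matching of exponents, and indeed Lemma~\ref{new} was deliberately phrased to output walk lengths of the forms $2K\kappa_p\kappa_{p+1}$ and $2K'\kappa_{p+1}\kappa_{p+2}$ precisely so that they slot into the hypothesis of Lemma~\ref{lem:claim 1} without any further adjustment.
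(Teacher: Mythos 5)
Your proposal is correct and follows essentially the same route as the paper: apply Lemma~\ref{new} with $q=p+2$ to produce the intermediate vertex $y\in U_j^{(p+1)}$ with walks of lengths $2K\kappa_p\kappa_{p+1}$ and $2K'\kappa_{p+1}\kappa_{p+2}$, then invoke Lemma~\ref{lem:claim 1} on $D_{p,p+1}$ and $D_{p+1,p+2}$ to obtain the two edges. Your explicit check that linear connectedness confines the two walks to $D_{p,p+1}$ and $D_{p+1,p+2}$ is a detail the paper leaves implicit, but it is the same argument.
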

\begin{proof} By the hypothesis, there exists $y$ in $D_{p+1}$ satisfying (i) and (ii) of Lemma~\ref{new}. Then, by Lemma~\ref{lem:claim 1}, (i) and (ii) imply $(i,j) \in E(B_{p,p+1})$ and $(j,k) \in E(B_{p+1,p+2})$, respectively, which is the desired conclusion.
\end{proof}
\noindent The following lemma is a generalization of Lemma~\ref{lem:claim 1} and Corollary~\ref{lemma4}.
\begin{Lem}\label{lemma5}
Let $D$ be a linearly connected digraph with only nontrivial strong components as many as $\eta \ge 3$. Let $x \in U_i^{(p)}$ and $z \in U_k^{(q)}$ for positive integers $p$ and $q$ satisfying $p+2 \le q \le \eta$. If there exists a directed $(x,z)$-walk of length $s\kappa_{p+1}\kappa_{p+2}\cdots\kappa_q$ for some positive integer $s$,  then there exists an $(i,k)$-path in the CS-graph of $D$ defined in Definition~\ref{def:partite}.
\end{Lem}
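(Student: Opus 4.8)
The plan is to induct on the gap $g = q - p$, which ranges over $2 \le g \le \eta - p$. The base case $g = 2$ is exactly Corollary~\ref{lemma4}: it produces an intermediate class $j \in \ZZ_{\kappa_{p+1}}$ with $(i,j) \in E(B_{p,p+1})$ and $(j,k) \in E(B_{p+1,p+2})$, and since both of these are edges of the CS-graph by Definition~\ref{def:partite}, the vertex sequence $i, j, k$ is the desired $(i,k)$-path. So the content is all in the inductive step, where I peel off the first stratum and reduce the gap by one.

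For the inductive step, I would assume $g = q - p \ge 3$ and that the statement holds for all smaller gaps. Given the directed $(x,z)$-walk of length $s\kappa_{p+1}\kappa_{p+2}\cdots\kappa_q$, I apply Lemma~\ref{new} to obtain a class $j \in \ZZ_{\kappa_{p+1}}$ and a vertex $y \in U_j^{(p+1)}$ fulfilling conclusions (i) and (ii). Conclusion (i) supplies a directed $(x,y)$-walk of length $2K\kappa_p\kappa_{p+1}$; because $y_1$ is chosen as the first vertex of the original walk lying in $D_{p+1}$, the $(x,y_1)$-section never leaves $V(D_p)\cup V(D_{p+1})$, so this walk lies inside $D_{p,p+1}$, and Lemma~\ref{lem:claim 1} applied to the two-component digraph $D_{p,p+1}$ yields $(i,j) \in E(B_{p,p+1})$, an edge of the CS-graph. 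Conclusion (ii) supplies a directed $(y,z)$-walk of length $K'\kappa_{p+1}\kappa_{p+2}\cdots\kappa_q = (K'\kappa_{p+1})\,\kappa_{p+2}\cdots\kappa_q$, which is a positive-integer multiple of $\kappa_{(p+1)+1}\cdots\kappa_q$; this is precisely the hypothesis of the lemma for the pair $y \in U_j^{(p+1)}$, $z \in U_k^{(q)}$, whose gap $q-(p+1)$ is one smaller.

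By the induction hypothesis there is then a $(j,k)$-path in the CS-graph. Concatenating the edge $(i,j)$ with this path gives an $(i,k)$-walk; since $i$ lies in the part $V_p$ and $k$ in the part $V_q$ with $p \ne q$, they are distinct vertices, and any walk between distinct vertices of a graph contains a path, which delivers the required $(i,k)$-path. (Passing through a walk rather than insisting the concatenation itself be a path avoids having to argue that the inductively obtained $(j,k)$-path avoids the vertex $i$ of layer $V_p$.)

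The step I expect to be the main obstacle is the length bookkeeping that keeps the induction self-feeding: I must check that the $(y,z)$-walk from Lemma~\ref{new}(ii) has length exactly a multiple of $\kappa_{p+2}\cdots\kappa_q$, the product governing the reduced instance, and the factorization $K'\kappa_{p+1}\kappa_{p+2}\cdots\kappa_q = (K'\kappa_{p+1})\cdot\kappa_{p+2}\cdots\kappa_q$ is what makes this transparent. A secondary point requiring care is the verification that the $(x,y)$-walk from (i) remains inside the induced subdigraph $D_{p,p+1}$, so that the two-component statement Lemma~\ref{lem:claim 1} legitimately applies; this uses the linear connectivity of $D$ together with the choice of $y_1$ as the first vertex of the walk in $D_{p+1}$.
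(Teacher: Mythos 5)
Your proof is correct and follows essentially the same route as the paper's: induction on the gap $q-p$ (the paper phrases it as induction on $m=q-p+1$) with Corollary~\ref{lemma4} as the base case, Lemma~\ref{new} to extract the intermediate vertex $y\in U_j^{(p+1)}$, Lemma~\ref{lem:claim 1} to get the edge $(i,j)\in E(B_{p,p+1})$, and the induction hypothesis applied to the $(y,z)$-walk of length $(K'\kappa_{p+1})\kappa_{p+2}\cdots\kappa_q$. Your two additional observations --- that the $(x,y)$-walk stays inside $D_{p,p+1}$ because linear connectivity prevents returning from $D_{p+2}$, and that the concatenation need only be a walk since any walk between distinct vertices contains a path --- are sound refinements of details the paper leaves implicit.
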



\begin{proof}
We proceed by induction on $m=q-p+1$. The statement holds for $m=3$ by Corollary~\ref{lemma4}. Suppose that the statement holds for $m-1$ ($m \ge 4$). Let $W$ be a directed $(x,z)$-walk of length $s\kappa_{p+1}\kappa_{p+2}\cdots\kappa_q$  for some $s \in \NN$. Then, by Lemma~\ref{new}, for some $j \in \ZZ_{\kappa_{p+1}}$ and for a vertex $y \in U^{(p+1)}_j$, there exist a directed $(x,y)$-walk of length $2K\kappa_p\kappa_{p+1}$ for some $K\in\NN$ and a directed $(y,z)$-walk of length $K'\kappa_{p+1}\kappa_{p+2}\cdots\kappa_q$ for some $K' \in \NN$.  Since there exists a directed $(x,y)$-walk of length $2K\kappa_p\kappa_{p+1}$, by Lemma~\ref{lem:claim 1}, $(i,j) \in E(B_{p,p+1}) \subset E(PT^{(\eta)}_D)$.

Since there is a  directed $(y,z)$-walk of length $K'\kappa_{p+1}\kappa_{p+2}\cdots\kappa_q$, by the induction hypothesis, there exists a $(j,k)$-path in $PT^{(\eta)}_D$, which, together with the edge $(i,j)$, results in an $(i,k)$-path in $PT^{(\eta)}_D$.
\end{proof}

The next theorem plays a key role to describe the limit of  $\{C(D^m)\}_{m=1}^{\infty}$.

\begin{Thm} \label{thm:main}
Let $D$ be a linearly connected digraph with only nontrivial strong components as many as $\eta \ge 3$. Suppose $x \in U_i^{(p)}$ and $y \in U_j^{(q)}$ for integers $p$, $q$, $i$, $j$ with $1 \le p \le q \le \eta$, $i \in \ZZ_{\kappa_p}$, $j\in \ZZ_{\kappa_q}$, respectively. Then, for any integer $r \ge \max\{p+1,q\}$, $x$ and $y$ have a step common prey in $D_r$ if and only if there exist an $(i,k)$-path and a $(j,k)$-path in the CS-graph  of $D$ for some $k \in \ZZ_{\kappa_r}$ satisfying the property that $k=j$ if and only if $q=r$.
\end{Thm}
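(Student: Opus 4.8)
The plan is to prove the two implications separately, each time translating the assertion about a common prey in $D_r$ into an assertion about the lengths of directed walks that land in a prescribed set of imprimitivity of $D_r$, and then to link walks whose length is a multiple of a product $\kappa_{p+1}\kappa_{p+2}\cdots\kappa_r$ with paths in the CS-graph through Lemma~\ref{lemma5} and Lemma~\ref{lem:claim 1}. Because every $D_t$ is nontrivial, the workhorse throughout is Lemma~\ref{brualdi1}, which tells me exactly which lengths of $(u,v)$-walks occur inside a strong component and lets me insert or append closed walks of every large multiple of $\kappa_t$; this is what gives the freedom to adjust walk lengths modulo the various $\kappa_t$ and to move endpoints within a fixed class.

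For the converse direction I would argue by direct construction. Given an $(i,k)$-path $i=c_p,c_{p+1},\dots,c_r=k$ in $PT_D^{(\eta)}$, each of its edges $(c_t,c_{t+1})\in E(B_{t,t+1})$ supplies, by Lemma~\ref{lem:claim 1} and the equivalent Lemma~\ref{lemma1}, directed walks of every length $2N\kappa_t\kappa_{t+1}$ with $N$ large between arbitrary chosen representatives of $U_{c_t}^{(t)}$ and $U_{c_{t+1}}^{(t+1)}$. Concatenating these along the path produces a directed walk from $x$ ending in $U_k^{(r)}$, and the $(j,k)$-path produces one from $y$ ending in $U_k^{(r)}$; in the case $q=r$ the $(j,k)$-path is the single vertex $k=j$ and the $y$-side is merely a walk inside $D_r$ issuing from $y\in U_j^{(r)}$. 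It then remains to steer the two terminal vertices to one common $w\in U_k^{(r)}$ and to equalize the two total lengths, which I would accomplish using Lemma~\ref{brualdi1} inside $D_r$ together with the free large parameters $N$; this step is routine bookkeeping and yields the required common prey.

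For the forward direction I would start from an $\alpha$-step common prey $w\in U_{k_0}^{(r)}$ and first replace it by one occurring at an arithmetically convenient length. Appending to both witnessing walks a common closed walk at $w$ in $D_r$ of length a large multiple of $\kappa_r$ (available by Lemma~\ref{brualdi1}) keeps $w$ a common prey at every length $\alpha+c\kappa_r$, and Lemma~\ref{lemma2} guarantees that, individually, each of $x$ and $y$ meets $D_r$ at all large lengths. Examining which classes of $D_r$ are reached at a given large length shows, again via Lemma~\ref{brualdi1}, that for each of $x$ and $y$ this set of classes is a fixed ``offset set'' translated by the length modulo $\kappa_r$; the persistence of the common prey at the lengths $\alpha+c\kappa_r$ forces the two offset sets to share the element $\delta\equiv k_0-\alpha\pmod{\kappa_r}$. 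Choosing the length to be a large multiple of $2\kappa_p\kappa_{p+1}\cdots\kappa_r$ then produces a common prey $w'\in U_k^{(r)}$ with $k\equiv\delta\pmod{\kappa_r}$ whose witnessing walks from $x$ and from $y$ have length divisible by $\kappa_{p+1}\cdots\kappa_r$ and by $\kappa_{q+1}\cdots\kappa_r$, respectively. Feeding these walks into Lemma~\ref{lemma5} (or into Lemma~\ref{lem:claim 1} when the spanned range consists of only two components) extracts an $(i,k)$-path and a $(j,k)$-path in $PT_D^{(\eta)}$ terminating at the common class $k$. When $q=r$ the vertex $y$ already lies in $D_r$, its offset set is the singleton $\{j\}$, so $\delta=j$ and hence $k=j$, and the $y$-side path degenerates to a point, matching the clause $k=j\iff q=r$ exactly.

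The main obstacle is precisely this length control in the forward direction. It is \emph{not} the case that an arbitrary walk reaching $U_{k}^{(r)}$ can be lengthened to a multiple of $\kappa_{p+1}\cdots\kappa_r$ with endpoints kept in their classes, so Lemma~\ref{lemma5} cannot be applied to the given common prey as it stands; the divisibility that Lemma~\ref{lemma5} demands is genuinely extra information. The crux is therefore to pass to a better common prey, and the substance of that step is the offset analysis: one must verify that the set of classes of $D_r$ reachable from a fixed vertex at a given large length is eventually periodic and shifts uniformly with the length, that the original common prey pins down a shared offset $\delta$, and that evaluating at a product-multiple length forces the common prey into the single class $k\equiv\delta$ compatibly with the boundary requirement $k=j\iff q=r$. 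Carrying out this modular bookkeeping cleanly, rather than the two applications of the already-established lemmas that bracket it, is where the real work lies.
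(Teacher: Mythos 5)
Your overall architecture coincides with the paper's: the `if' direction is proved by concatenating, along the two CS-paths, walks of length $2s\lambda$ supplied by Lemma~\ref{lemma1} and then splicing and equalizing lengths with Lemma~\ref{brualdi1}; the `only if' direction is proved by normalizing the common prey to a length divisible by $\kappa_{p+1}\cdots\kappa_r$ and feeding the two witnessing walks into Lemma~\ref{lemma5} (or Lemma~\ref{lem:claim 1} when only two components are spanned). The one place you genuinely diverge is the step you single out as ``where the real work lies,'' and there you have misdiagnosed the difficulty. You are right that a walk ending in $U_{k_0}^{(r)}$ cannot be relengthened to an arbitrary residue modulo $\kappa_r$ while keeping its endpoint in $U_{k_0}^{(r)}$; but the theorem only asks for \emph{some} $k\in\ZZ_{\kappa_r}$, so there is no need to keep the endpoint in its class. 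The paper simply appends to \emph{both} witnessing walks one and the same $(z,z')$-walk of length $m'-m$ inside the nontrivial component $D_r$ (the mechanism of Lemma~\ref{lemma2}); the new common prey $z'$ lands in whatever class it lands in, the length is now $2s\kappa_{p+1}\cdots\kappa_r$, and Lemma~\ref{lemma5} applies verbatim. Your ``offset set'' analysis reaches the same conclusion and is, as far as I can tell, sound---the classes of $D_r$ reached at large lengths do translate uniformly by Lemma~\ref{brualdi1}, the shared offset $\delta\equiv k_0-\alpha\pmod{\kappa_r}$ exists, and evaluating at a multiple of $\kappa_r$ pins down $k$, with the $q=r$ case forcing $k=j$ exactly as you say---but it re-proves by hand what Lemma~\ref{lemma2} already delivers in one line, and you should not present that lighter route as unavailable. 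Two pieces of bookkeeping you wave at but must still carry out: in the `if' direction the two concatenated walks must terminate at the \emph{same vertex} of $U_k^{(r)}$, which the paper arranges by fixing $z:=x_{\ell(P)}=y_{\ell(Q)}$ in advance rather than steering afterwards; and when $q<r$ the clause $k\neq j$ holds automatically because $k\in\ZZ_{\kappa_r}$ and $j\in\ZZ_{\kappa_q}$ lie in different parts of $PT_D^{(\eta)}$.
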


\begin{proof}
To show the `if' part, suppose that for an integer $r \ge \max\{p+1,q\}$ and an element $k \in \ZZ_{\kappa_r}$, there exist an $(i,k)$-path $P$ and a $(j,k)$-path $Q$ where $k=j$ if and only if  $q=r$. Let
\begin{align*}
P: & i =: i_0 \rightarrow i_1 \rightarrow \cdots \rightarrow i_{\ell(P)-1} \rightarrow i_{\ell(P)} := k, \\
Q: & j =: j_0 \rightarrow j_1 \rightarrow \cdots \rightarrow j_{\ell(Q)-1} \rightarrow j_{\ell(Q)} := k.
\end{align*}
Since $r \ge p+1$, by the definition of a CS-graph, we may take a vertex $x_t \in U_{i_t}^{(p+t)}$, especially $x_0=x$ and, in case $q=r$, $x_{\ell(P)}=y$, for each $t=0$, $1$, $\ldots$, $\ell(P)$. We set $z:=x_{\ell(P)}$. We will show that $z$ is an $m$-step common prey of $x$ and $y$ for some $m \in \NN$. For simplicity, let $\lambda = \kappa_1 \kappa_2 \cdots \kappa_\eta$. By Lemma~\ref{lemma1}, there exists (a sufficiently large integer) $N_1 \in \NN$ such that, for any $s \ge N_1$, there exists a directed $(x_t, x_{t+1})$-walk $W_{t+1}(s)$ of length $2s\lambda$ for each $t = 0,1,\ldots,\ell(P)-1$. For any integer $s \ge N_1$, the concatenation of $W_1(s)$, $\ldots$, $W_{\ell(P)}(s)$ results in a directed $(x,z)$-walk of length $2s\ell(P)\lambda$. Suppose $r=q$. Then $k=j$ and $z=y$ by the assumption. By Lemma~\ref{brualdi1}, there exists a positive integer $N_2$ such that, for any integer $s \ge N_2$, there exists a directed $(y,y)$-walk of length $2s\ell(P)\lambda$. If we let $m=2\ell(P)\lambda \max\{N_1,N_2\}$, then $y$ is an $m$-step common prey of $x$ and $y$.  Now suppose that $r \ge q+1$. Then we may take a vertex $y_t \in U_{j_t}^{(q+t)}$, especially $y_0=y$ and $y_{\ell(Q)}=z$, for $t = 0,\ldots,\ell(Q)$.
 By Lemma~\ref{lemma1}, there exists (a sufficiently large integer) $N_3 \in \NN$ such that, for any integer $s \ge N_3$, there exists a directed $(y_t, y_{t+1})$-walk $W'_{t+1}(s)$ of length $2s\lambda$ for each $t = 0,1,\ldots,\ell(Q)-1$. For any integer $s \ge N_3$, the concatenation of $W'_1(s)$, $\ldots$, $W'_{\ell(Q)}(s)$ results in a directed $(y,z)$-walk of length $2s\ell(Q)\lambda$. As there exists a directed  $(x,z)$-walk of length $2s\ell(P)\lambda$ for any integer $s \ge N_1$, $z$ is an $m$-step common prey of $x$ and $y$ for $m:=2\ell(P)\ell(Q)\lambda\max\{N_1,N_3\}$.

To show the `only if' part, suppose that $x$ and $y$ have an $m$-step common prey  in $D_r$ for some $r \ge \max\{p+1,q\}$ and $m \in \NN$.    By Lemma~\ref{lemma2}, $x$ and $y$ have an $2s\kappa_{p+1} \kappa_{p+2} \cdots \kappa_q$-step common prey $z$ in $D_r$ for some $s \in \NN$ satisfying $s\kappa_{p+1} \kappa_{p+2} \cdots \kappa_q \ge m$. Since $z$ belongs to $D_r$, $z \in U_k^{(r)}$ for some $k \in \ZZ_{\kappa_r}$. If $r \ge p+2$, then there is an $(i,k)$-path in $PT_D^{(\eta)}$ by Lemma~\ref{lemma5} and if $r=p+1$, then there is an edge $(i,k)$ in $B_{p,p+1}$ by Lemma~\ref{lem:claim 1}, which is an $(i,k)$-path in $PT_D^{(\eta)}$. Thus we have shown that there is an $(i,k)$-path in $PT_D^{(\eta)}$.

If $r \ge q+2$, then, by Lemma~\ref{lemma5}, there is a $(j,k)$-path in $PT_D^{(\eta)}$. If $r= q+1$, then there is an edge $(j,k)$ in $B_{q,q+1}$ by Lemma~\ref{lem:claim 1}, which is a $(j,k)$-path in $PT_D^{(\eta)}$.

Now suppose that $r=q$. Since $z$ is a step common prey of $x$ and $y$, there exist a directed $(x,z)$-walk $W_1$ and a directed $(y,z)$-walk $W_2$ of the same length. Since $y$ and $z$ belong to the same strong component, there exists a directed $(z,y)$-walk $W_3$.  Then $W_1W_3$ and $W_2W_3$ are a directed $(x,y)$-walk and a directed $(y,y)$-walk, respectively, of the same length, which implies that $y$ is a step common prey of $x$ and $y$. In particular, by Lemma~\ref{brualdi1}, there exists a positive integer $N$ such that, for any $s \ge N$, $y$ is a $2s\kappa_{p+1} \kappa_{p+2} \cdots \kappa_q$-step common prey of $x$ and $y$. Then there is an $(i,j)$-path in $PT_D^{(\eta)}$ by Lemma~\ref{lem:claim 1} or Lemma~\ref{lemma5} depending upon whether $q=p+1$ or $q \ge p+2$. We take the $(j,j)$-path for $j \in \ZZ_{\kappa_q}$, which is trivial, and set $k:=j$.

If $k=j$, then $q=r$ as $j \in \ZZ_{k_q}$ and $k \in \ZZ_{k_r}$ and so the theorem follows.
\end{proof}

Based on the observations which we have made so far, we generalize the notion of expansion given in Definition~\ref{def:expansion} as follows.
\begin{Def}\label{def:expansion3}
Let $G=(V_1,\ldots,V_\eta)$ be an $\eta$-partite graph without edges between $V_i$ and $V_j$ if $|i-j| \ge 2$. From $G$, we construct a supergraph of $G$ as follows (see Figure~\ref{expnsn} for an illustration):

(Step 1) We replace each vertex $v$ with a complete graph $G_v$ (of any size) so that $G_v$ and $G_w$ are vertex-disjoint if $v \neq w$.

(Step 2) For each $x \in V_i$, $y \in V_j$ where $i \le j$, we join each vertex of $G_x$ and each vertex of $G_y$ if there is an integer $k \ge j$ such that if $j \ge k+1$, then there are an $(x,z)$-path and a $(y,z)$-path in $G$ for some $z \in V_k$ and if $j=k$, then there is an $(x,y)$-path.

\begin{figure}
\psfrag{1}{$x_1$}\psfrag{2}{$x_2$}\psfrag{A}{$G_{x_1}$}\psfrag{B}{$G_{x_2}$}
\psfrag{3}{$y_1$}\psfrag{4}{$y_2$}\psfrag{5}{$y_3$}\psfrag{6}{$y_4$}
\psfrag{D}{$G_{y_1}$}\psfrag{E}{$G_{y_2}$}\psfrag{F}{$G_{y_3}$}\psfrag{G}{$G_{y_4}$}
\psfrag{7}{$z_1$}\psfrag{8}{$z_2$}\psfrag{H}{$G_{z_1}$}\psfrag{I}{$G_{z_2}$}
\psfrag{K}{$G$}\psfrag{J}{$G^*$}
\begin{center}
\includegraphics[height=6.5cm]{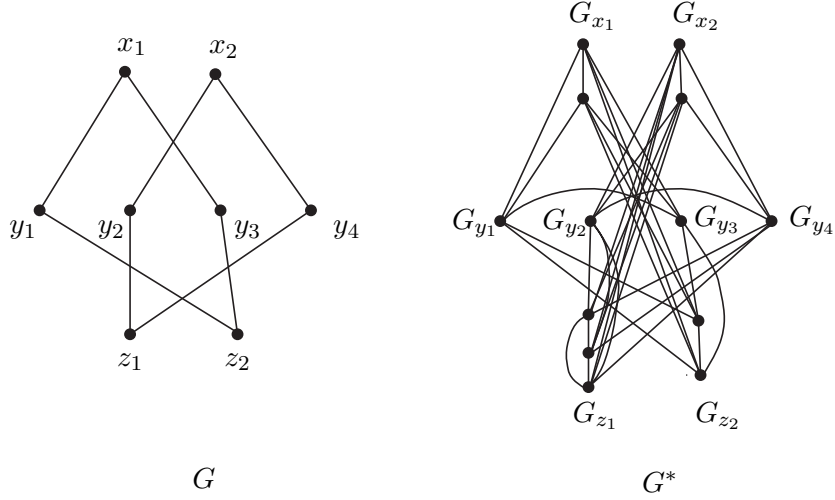}
\end{center}
\caption{A graph $G$ and an expansion $G^*$ of $G$.}
\label{expnsn}
\end{figure}
We say that any graph resulting from this construction is an {\em expansion of $G$}.
\end{Def}
\noindent We note that an expansion of an $\eta$-partite graph $G$ is uniquely determined by the vertex sets of complete graphs replacing the vertices of $G$.

In order to describe the limit of the graph sequence $\{C(D^m)\}_{m=1}^\infty$ for a linearly connected digraph $D$ with only nontrivial strong components, we need one more lemma in the following.
\begin{Lem}[\cite{ppk}] \label{lem:subgraph}
Let $D$ be a weakly connected digraph with exactly two strong components $D_1$ and $D_2$. Then there exists an integer $M$ such that $C(D^m)$ contains  complete graphs whose vertex sets are $U_1^{(1)}$, $\ldots$, $U_{\kappa(D_1)}^{(1)}$, $U_1^{(2)}$, $\ldots$, $U_{\kappa(D_2)}^{(2)}$, respectively, as subgraphs for $m\ge M$.
\end{Lem}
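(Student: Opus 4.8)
The plan is to prove the statement one strong component at a time, since each set of imprimitivity $U_k^{(i)}$ lies entirely inside $D_i$ and, as I will arrange, the common prey witnessing the required edges can be found inside $D_i$ as well. If $D_i$ is trivial, its unique set of imprimitivity is a single vertex, whose induced subgraph in $C(D^m)$ is the (trivially complete) graph on one vertex for every $m$, so nothing needs to be shown. Hence it suffices to treat a nontrivial component $D_i$ and prove that, for each $k$, any two distinct vertices $x,y \in U_k^{(i)}$ are adjacent in $C(D^m)$ for all sufficiently large $m$; an edge of $C(D^m)=C^m(D)$ between $x$ and $y$ is exactly a common $m$-step prey, so it is enough to exhibit one such prey inside $D_i$ for every large $m$.

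First I would record the structural fact that, in a strongly connected $D_i$ with index of imprimitivity $\kappa_i$ and sets of imprimitivity $U_1^{(i)},\ldots,U_{\kappa_i}^{(i)}$, every arc goes from one set to the next, so a directed walk of length $m$ starting in $U_k^{(i)}$ must end in $U_b^{(i)}$ with $b \equiv k+m \pmod{\kappa_i}$. The point that makes the argument work is that $x$ and $y$ carry the same index $k$, so they impose the \emph{same} residue constraint on the endpoint of a length-$m$ walk; this is precisely why sets of imprimitivity (rather than arbitrary pairs) are the right units to complete.

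Next I would invoke Lemma~\ref{brualdi1}: there is an integer $N$, uniform over all vertex pairs of $D_i$, such that between any vertex of $U_a^{(i)}$ and any vertex of $U_b^{(i)}$ there are directed walks of every length $b-a+t\kappa_i$ with $t \ge N$. Given a large $m$, I choose $b$ in the appropriate range with $b \equiv k+m \pmod{\kappa_i}$, pick any $w \in U_b^{(i)}$, and write $m = (b-k)+t\kappa_i$; once $m$ is large enough we have $t \ge N$, so there exist both an $(x,w)$-walk and a $(y,w)$-walk of length $m$. Thus $w$ is a common $m$-step prey of $x$ and $y$, so they are adjacent in $C(D^m)$. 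Taking a threshold $M_{x,y}$ beyond which this holds for every $m$, and then setting $M$ to be the maximum of $M_{x,y}$ over the finitely many pairs $x,y$ sharing a set of imprimitivity in $D_1$ or $D_2$, yields the desired uniform $M$.

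The one delicate point—the main obstacle—is securing a single threshold $M$ that works for \emph{every} $m \ge M$ rather than merely for infinitely many $m$. This is exactly where one exploits that the common prey $w$ may depend on $m$, through the set $U_{b(m)}^{(i)}$ it is chosen from: as $m$ grows, $b(m)$ sweeps out all residue classes modulo $\kappa_i$, and the uniformity of $N$ in Lemma~\ref{brualdi1} together with the finiteness of the vertex set guarantees that no value of $m \ge M$ is left uncovered. I note finally that neither the hypothesis of exactly two strong components nor the absence of arcs from $D_2$ to $D_1$ is used in an essential way; the entire argument is internal to each component, which is why the statement extends verbatim to the components of a linearly connected digraph.
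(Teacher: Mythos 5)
Your argument is correct: since $x$ and $y$ lie in the same set of imprimitivity $U_k^{(i)}$ of the nontrivial strong component $D_i$, Lemma~\ref{brualdi1} applied to $D_i$ gives both of them directed walks of every sufficiently large length $m \equiv b-k \pmod{\kappa_i}$ to any fixed $w\in U_b^{(i)}$, so $w$ is a common $m$-step prey and the finiteness of the vertex set yields a uniform threshold $M$. The paper itself states this lemma without proof (citing~\cite{ppk}), and your proof is the standard argument that the quoted source uses; your closing remark that the two-component hypothesis is inessential is also accurate and is implicitly exploited in the paper when the lemma is applied to the subdigraphs $D_{i,i+1}$ of a linearly connected digraph.
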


Now we present the theorem stating what the limit of the sequence $\{C(D^m)\}_{m=1}^\infty$ is for a linearly connected digraph $D$ with only nontrivial strong components.
\begin{Thm}\label{thm:main3}
Let $D$ be a linearly connected digraph with only nontrivial strong components as many as $\eta \ge 3$. Then the limit graph of the sequence $\{C(D^m)\}_{m=1}^\infty$ is an expansion of the CS-graph of $D$.
\end{Thm}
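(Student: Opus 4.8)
The plan is to identify the limit graph $G^\ast := \lim_m C(D^m)$ explicitly and show it coincides, edge for edge, with the expansion of the CS-graph $PT_D^{(\eta)}$ in which each CS-vertex $i \in V_p = \ZZ_{\kappa_p}$ is replaced by the complete graph on the set of imprimitivity $U_i^{(p)}$. First I would record that $G^\ast$ exists: since every strong component is nontrivial, every vertex of $D$ has an out-neighbour, so $\{C(D^m)\}_{m=1}^\infty$ converges by Theorem~\ref{thm:nontrivial}. As an expansion of $PT_D^{(\eta)}$ is determined by the complete graphs replacing its vertices, and here $\bigcup_{p,i} U_i^{(p)} = V(D)$, both $G^\ast$ and this expansion have vertex set $V(D)$; it remains only to match adjacencies.

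The bridge between the two descriptions is that, because all components are nontrivial, adjacency in the limit is governed entirely by the existence of a step common prey: combining convergence with Corollary~\ref{cor:lemma2}, two distinct vertices $x$ and $y$ are adjacent in $G^\ast$ if and only if they have an $\alpha$-step common prey for some $\alpha$. I would then split according to positions $x \in U_i^{(p)}$, $y \in U_j^{(q)}$ with $p \le q$. The intra-class case is handled by Lemma~\ref{lem:subgraph}: for $p=q$ and $i=j$ the two vertices lie in the same $U_i^{(p)}$, which is a clique in $C(D^m)$ for all large $m$ and hence in $G^\ast$, matching exactly the clique $G_v$ produced in Step~1 of the expansion. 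In every remaining case any common prey must lie in a later component, which I would argue precisely: forward connectivity forces any common prey into some $D_r$ with $r \ge q$, while a common prey inside $D_q$ (when $p=q$) would force $i \equiv j \pmod{\kappa_q}$, i.e.\ $i=j$, which is excluded; hence $r \ge \max\{p+1,q\}$ and Theorem~\ref{thm:main} becomes applicable.

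Applying Theorem~\ref{thm:main} then converts ``$x$ and $y$ have a step common prey in some $D_r$ with $r \ge \max\{p+1,q\}$'' into the existence of $k \in \ZZ_{\kappa_r}$ together with an $(i,k)$-path and a $(j,k)$-path in $PT_D^{(\eta)}$ satisfying $k=j$ if and only if $q=r$. Reading $r$ as the partite index and $k$ as the target CS-vertex, this is word for word the rule in Step~2 of Definition~\ref{def:expansion3}, the alternative $r=q$ becoming the $(x,y)$-path clause and $r>q$ becoming the common-endpoint clause. Thus $x$ and $y$ are joined in $G^\ast$ exactly when they are joined in the expansion, and the two graphs coincide.

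The main obstacle is the careful bookkeeping in this last matching step. One must check that the reduction to $r \ge \max\{p+1,q\}$ is exhaustive --- in particular the subcase $p=q$, $i \ne j$, where every edge must come from a strictly later component, and the same-class subcase, where Step~1 rather than Step~2 supplies the edge --- and that the path condition of Theorem~\ref{thm:main} is genuinely \emph{equivalent} to the join rule of Step~2 rather than merely sufficient: for instance confirming that the trivial $(j,j)$-path forced when $r=q$ corresponds precisely to the $(x,y)$-path clause, and that the stipulation $k=j \Leftrightarrow q=r$ is automatic because CS-vertices lying in distinct partite classes are distinct. Once this dictionary between Theorem~\ref{thm:main} and Definition~\ref{def:expansion3} is verified, the theorem follows.
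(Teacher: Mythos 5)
Your proposal is correct and follows essentially the same route as the paper: it takes the expansion of $PT_D^{(\eta)}$ in which each CS-vertex $i\in\ZZ_{\kappa_p}$ is replaced by the clique on $U_i^{(p)}$, reduces adjacency in the limit to the existence of a step common prey via Corollary~\ref{cor:lemma2}, handles the same-imprimitivity-class case by Lemma~\ref{lem:subgraph}, and settles all remaining cases by Theorem~\ref{thm:main}, exactly as in the paper's two-inclusion argument for $E(G)=E(G^*)$. The only difference is cosmetic: you make the reduction to $r\ge\max\{p+1,q\}$ and the dictionary with Definition~\ref{def:expansion3} slightly more explicit than the paper does.
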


\begin{proof}
Denote by $G$ the limit graph of the sequence $\{C(D^m)\}_{m=1}^\infty$. We take the expansion $G^*$ of $PT^{(\eta)}_D$ obtained by replacing the vertex $i$ in $\ZZ_{\kappa_j}$ with the complete graph whose vertex set is the set of imprimitivity $U_i^{(j)}$ of $D_j$ for $i\in \ZZ_{\kappa_j}$ and  $j\in \{1, \ldots, \eta \}$. Obviously $V(G)=V(G^*)$. In the following, we show that $E(G)=E(G^*)$.

Now take two vertices $x$ and $y$ of $G$. Then  $x \in U_i^{(p)}$ and $y \in U_j^{(q)}$ for some $p$, $q$, $i$, $j$ satisfying $1 \le p, q \le \eta$, $i \in \ZZ_{\kappa_p}$, $j \in \ZZ_{\kappa_q}$, respectively. Without loss of generality, we may assume $p \le q$. First, suppose that $x$ and $y$ are adjacent in $G$. Then $x$ and $y$ have a step common prey in $D_r$ for some integer $r \ge q$. If $p=r$, then $q=r$ and so, by one of well-known properties of sets of imprimitivity, $i=j$. Then, by the definition of expansion, $x$ and $y$ are adjacent in $G^*$. Now we assume that $p \le r+1$.  Then, by Theorem~\ref{thm:main}, there exists $k \in \ZZ_{\kappa_r}$ such that there exist an $(i,k)$-path and a $(j,k)$-path in  $PT^{(\eta)}_D$. By the definition of expansion, all the vertices in $U_i^{(p)}$ and all the vertices in $U_j^{(q)}$ are adjacent and so $x$ and $y$ are adjacent in $G^*$. Hence we have shown that $E(G) \subset E(G^*)$.

To show $E(G^*) \subset E(G)$, suppose that vertices $u$ and $v$ are adjacent in $G^*$.  Then, by the definition of expansion, one of the following holds: \begin{itemize}
\item[(i)] $u$ and $v$ belong to the same set of imprimitivity, that is, $\{u,v\} \subset U_j^{(i)}$ for some $i \in \{1, \ldots, \eta \}$ and $j \in\ZZ_{\kappa_i}$;
\item[(ii)] $u$ and $v$ belong to different sets of imprimitivity, i.e., $u \in U_i^{(p)}$, $v \in U_j^{(q)}$ for some integers $p$, $q$ satisfying $1 \le p < q \le \eta$, and either there exist an $(i,k)$-path and a $(j,k)$-path in $PT^{(\eta)}_D$ for some integer $r$, $q+1 \le r \le \eta$, and $k \in \ZZ_{\kappa_r}$ or  there is an $(i,j)$-path.
\end{itemize}
Noting that the $m$-step competition graph of $D_{i,i+1}$ is the subgraph of the $m$-step competition graph of $D$ for any $m \in \NN$ and $i \in \{1, \ldots, \eta-1 \}$, we can conclude that $u$ and $v$ are adjacent in $G$ if (i) is true by  Lemma~\ref{lem:subgraph}. Consider the case (ii). Then, by Theorem~\ref{thm:main}, $u$ and $v$ have a step common prey. Thus, by Corollary~\ref{cor:lemma2}, $u$ and $v$ are adjacent in $G$.
\end{proof}

We consider a matrix $A$ in the form given in (\ref{form}) where $A_{ii}$ has order at least two for each $i=1$, $2$, $\ldots$, $\eta$. Then for a block $A_{i,j}^{(p,p+1)}$ of $A_{p,p+1}$ defined in ($\S$) for $p=1$, $\ldots$, $\eta-1$, it is easy to see that $A^{(p,p+1)}_{ij} \neq O$ if and only if $(i,j) \in I(D_{p,p+1})$ where $D$ is the digraph of $A$. Now Theorem~\ref{thm:main3} may be translated into matrix version in the following way:

\begin{Cor}
Let $A \in \BBB_n$ be a matrix in the form given in (\ref{form}) where $A_{ii}$ has order at least two for each $i=1$, $2$, $\ldots$, $\eta$.
\setlength{\arraycolsep}{10pt} 
Then $\{ \Gamma(A^m) \}_{m=1}^\infty$ converges to a (symmetric) matrix $A'$ such that
\begin{equation*}
PA'P^T =
\begin{bmatrix}
	C_{11} & C_{12} & C_{13} & \cdots & C_{1,\eta-1} & C_{1,\eta} \\
	C_{21} & C_{22} & C_{23} & \cdots & C_{2,\eta-1} & C_{2,\eta} \\
	C_{31} & C_{32} & C_{33} & \cdots & C_{3,\eta-1} & C_{3,\eta} \\
	\vdots & \vdots & \vdots & \ddots & \vdots & \vdots \\
	C_{\eta-1,1} & C_{\eta-1,2} & C_{\eta-1,3} & \cdots & C_{\eta-1,\eta-1} & C_{\eta-1,\eta}\\
	C_{\eta,1} & C_{\eta,2} & C_{\eta,3} & \cdots & C_{\eta,\eta-1} & C_{\eta\eta}\\
\end{bmatrix}
\end{equation*}
where, for each $p$, $q = 1, \ldots, \eta$,  $C_{pp}$ has the same order as that of $A_{pp}$, $C_{pq}=C_{qp}^{T}$, and for the block $C_{ij}^{(pq)}$ of $C_{pq}$, which is of the same order as $A_{ij}^{(pq)}$ and located at the position in $PA'P^T$ where $A_{ij}^{(pq)}$ is located in $PAP^T$,
\[C_{ij}^{(pq)}= \begin{cases} J^* & \text{if (P1) or (P2) is satisfied;} \\
O & \text{otherwise.} \end{cases}\]
\begin{itemize}
\item[(P1)] $p=q$ and $i=j$;
\item[(P2)] $p \neq q$ or $i \neq j$, and either
 \begin{itemize}
 \item there exist positive integers $t_1$ and $t_2$ satisfying  $p+t_1=q+t_2$ such that, for each $r=0$, $\ldots$, $t_1$ and $s=0$, $\ldots$, $t_2$, there exist some positive integers $i_r$ ($i_0=i$), $j_s$ ($j_0=j$), some integers $k_r$, $l_r$, $m_s$, $n_s$, and nonnegative integers $a_r$, $b_s$  for which   $A_{k_rl_r}^{(p+r,p+r+1)} \neq O$,  $A_{m_sn_s}^{(q+s,q+s+1)} \neq O$ and  which satisfy $i_r \equiv k_r+a_r+1 \pmod{\kappa_{p+r}}$, $i_{r+1} \equiv l_r+a_r \pmod{\kappa_{p+r+1}}$, $j_s \equiv m_s+b_s+1 \pmod{\kappa_{q+s}}$, $j_{s+1} \equiv n_s+b_s \pmod{\kappa_{q+s+1}}$,   or
\item $p \neq q$ and, if $p<q$, then there exists a positive integer $t$ satisfying $p+t=q$ such that for each $r=0$, $\ldots$, $t$, there exist some positive integer $i_r$ ($i_0=i$, $i_{q}=j$), some integers $k_r$, $l_r$, and some nonnegative integers $a_r$ for which $A_{k_rl_r}^{(p+r,p+r+1)} \neq O$ and which satisfy $i_r \equiv k_r+a_r+1 \pmod{\kappa_{p+r}}$, $i_{r+1} \equiv l_r+a_r \pmod{\kappa_{p+r+1}}$; if $q < p$, then $C_{pq}^T=J^*$.
\end{itemize}
\end{itemize}
 (We mean by $O$ a zero matrix of any size and by $J^*$ a square matrix of any order such that all the diagonal entries are 0 and all the off-diagonal entries are 1.)
\end{Cor}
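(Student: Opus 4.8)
The plan is to derive everything from Theorem~\ref{thm:main3}, since the only genuinely new content of the corollary is the translation of the graph-theoretic description of the limit into the block language of $(\S)$. Recall that for any $A\in\BBB_n$ the matrix $\Gamma(A^m)$ is the adjacency matrix of $C(D^m)$ for $D=D(A)$, so $\{\Gamma(A^m)\}_{m=1}^\infty$ converges exactly when $\{C(D^m)\}_{m=1}^\infty$ does, and the limit matrix $A'$ is the adjacency matrix of the limit graph $G$ of $\{C(D^m)\}_{m=1}^\infty$. Since every diagonal block $A_{ii}$ has order at least two, every strong component $D_i$ is nontrivial; hence every vertex of $D$ has an out-neighbor and convergence is guaranteed by Theorem~\ref{thm:nontrivial}. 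For $\eta\ge 3$, Theorem~\ref{thm:main3} identifies $G$ with the expansion $G^*$ of the CS-graph $PT^{(\eta)}_D$ obtained by replacing the vertex $i\in\ZZ_{\kappa_j}$ with the complete graph on $U^{(j)}_i$, and the cases $\eta=1,2$ are handled identically by the irreducible result of \cite{ppk} and by Theorem~\ref{thm:main2}. Symmetry of $A'$ (i.e.\ $C_{pq}=C_{qp}^T$) is immediate because $G$ is an undirected graph, so it suffices to decide each block $C^{(pq)}_{ij}$ for $p\le q$; the clause ``if $q<p$ then $C_{pq}^T=J^*$'' in (P2) merely records this symmetry.

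First I would fix the dictionary between blocks and vertex pairs. By $(\S)$, the block $C^{(pq)}_{ij}$ is precisely the submatrix of $A'$ whose rows are indexed by $U^{(p)}_i$ and whose columns by $U^{(q)}_j$; hence $C^{(pq)}_{ij}=J^*$ exactly when every vertex of $U^{(p)}_i$ is adjacent in $G^*$ to every vertex of $U^{(q)}_j$, with the main diagonal zeroed when $(p,i)=(q,j)$. Step~1 of Definition~\ref{def:expansion3} makes each $U^{(p)}_i$ into a clique, so $C^{(pp)}_{ii}=J^*$ always, which is (P1). For the remaining blocks I would combine Step~2 of Definition~\ref{def:expansion3} with Theorem~\ref{thm:main}: two vertices $u\in U^{(p)}_i$ and $v\in U^{(q)}_j$ with $(p,i)\neq(q,j)$ are completely joined in $G^*$ if and only if they have a step common prey, and by Theorem~\ref{thm:main} this happens if and only if there are an integer $r\ge\max\{p+1,q\}$ and a $k\in\ZZ_{\kappa_r}$ admitting an $(i,k)$-path and a $(j,k)$-path in $PT^{(\eta)}_D$ with $k=j$ exactly when $r=q$.

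The heart of the argument is then to rewrite ``path in $PT^{(\eta)}_D$'' as the chains of congruences displayed in (P2). Using the equivalence $A^{(p,p+1)}_{ij}\neq O \Leftrightarrow (i,j)\in I(D_{p,p+1})$ noted before the statement together with Definition~\ref{def:bipartite}, an ordered pair $(i_r,i_{r+1})$ is an edge of $B_{p+r,p+r+1}$ if and only if there are $(k_r,l_r)$ with $A^{(p+r,p+r+1)}_{k_rl_r}\neq O$ and an integer $a_r$ such that $i_r\equiv k_r+a_r+1\pmod{\kappa_{p+r}}$ and $i_{r+1}\equiv l_r+a_r\pmod{\kappa_{p+r+1}}$, and one may take $a_r\ge 0$ after adding a common multiple of $\kappa_{p+r}$ and $\kappa_{p+r+1}$. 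These are exactly the congruences appearing in (P2). Consequently a monotone $(i,k)$-path $i=i_0\to i_1\to\cdots\to i_{t_1}=k$ in $PT^{(\eta)}_D$, in which the $t$-th vertex lies in the part indexed by $p+t$, is the same data as the first family of congruences, a $(j,k)$-path $j=j_0\to\cdots\to j_{t_2}=k$ the same as the second, and the requirement of a common endpoint forces $p+t_1=q+t_2$ together with $i_{t_1}=j_{t_2}=k$. The case $r\ge q+1$ then yields the first alternative of (P2), while the degenerate case $r=q$, in which $k=j$ and the $(j,k)$-path is trivial, leaves only the $(i,j)$-path and yields the second alternative. Assembling these equivalences gives $C^{(pq)}_{ij}=J^*$ if and only if (P1) or (P2) holds.

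The step I expect to be the main obstacle is precisely the index bookkeeping of the previous paragraph. Two places demand the most care. The first is the ``$+1$'' shift built into the CS-graph edges in Definition~\ref{def:bipartite} versus the row/column convention of $(\S)$, where the rows of $C^{(pq)}_{ij}$ come from $A^{(pp)}_{i,i+1}$ but its columns from $A^{(qq)}_{j-1,j}$: one must verify that these offsets are exactly absorbed into the congruences $i_r\equiv k_r+a_r+1$ and $i_{r+1}\equiv l_r+a_r$ and do not accumulate a spurious shift when successive edges are concatenated into a path. The second is the monotonicity and endpoint-matching: since $D$ is linearly connected, every directed walk traverses strong components in non-decreasing index order, so the paths produced by Lemma~\ref{lemma5} and used in Theorem~\ref{thm:main} are monotone with the $t$-th vertex in part $p+t$; one must confirm that this monotone reading is what (P2) encodes and that demanding a common $k$ (equivalently $i_{t_1}=j_{t_2}$ with $p+t_1=q+t_2$) matches the meeting condition of Step~2 of Definition~\ref{def:expansion3}, with the boundary case $r=q$ collapsing to the single-path alternative.
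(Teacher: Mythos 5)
Your proposal is correct and follows exactly the route the paper intends: the paper states this corollary without proof as a direct "matrix translation" of Theorem~\ref{thm:main3} (via Theorem~\ref{thm:main} and the observation that $A^{(p,p+1)}_{ij}\neq O$ iff $(i,j)\in I(D_{p,p+1})$), and your dictionary between the blocks $C^{(pq)}_{ij}$ and the pairs $U^{(p)}_i\times U^{(q)}_j$, together with the rewriting of CS-graph edges as the congruences in (P2), is precisely that translation. Your explicit insistence on the common endpoint $i_{t_1}=j_{t_2}$ and on the monotone reading of the paths is the right way to fill in the details the paper leaves implicit.
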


Park~{\em et al.}~\cite{ppk} characterized a digraph $D$ for which $\{C(D^m)\}_{m=1}^{\infty}$ converges to a union of complete subgraphs as follows:
\begin{Thm}[\cite{ppk}] Let $D$ be a weakly connected digraph with exactly two strong components $D_1$ and $D_2$ both of which are nontrivial and without arc from $D_2$ to $D_1$. Suppose that $\{C(D^m)\}_{m=1}^{\infty}$ converges to a graph $G$.
Then $G$ is the union of  complete subgraphs if and only if $\kappa_2$ divides $\kappa_1$ and $i-j\equiv i'-j'\pmod{\kappa_2}$ for any $({i},{j})$, $({i'},{j'})\in I(D)$.
\label{thm:simple}
\end{Thm}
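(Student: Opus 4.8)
The plan is to reduce the statement entirely to a combinatorial property of the bipartite graph $B_D$ of Definition~\ref{def:bipartite}, and then to an arithmetic condition on $I(D)$. By Theorem~\ref{thm:main2} the limit $G$ is an expansion of $B_D$ in the sense of Definition~\ref{def:expansion}, and by Lemma~\ref{lem:subgraph} the clique replacing a vertex $i \in \ZZ_{\kappa_1}$ (resp.\ $j \in \ZZ_{\kappa_2}$) is exactly the set of imprimitivity $U_i^{(1)}$ (resp.\ $U_j^{(2)}$), so every replacing clique is nonempty. First I would record the structural rules of this expansion: for $i \neq i'$, a vertex of $U_i^{(1)}$ and a vertex of $U_{i'}^{(1)}$ are adjacent exactly when $i$ and $i'$ have a common neighbour in $B_D$; a vertex of $U_i^{(1)}$ and a vertex of $U_j^{(2)}$ are adjacent exactly when $(i,j) \in E(B_D)$; while for $j \neq j'$ two vertices lying in $U_j^{(2)}$ and $U_{j'}^{(2)}$ are \emph{never} adjacent. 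This last point, which creates the asymmetry between the two sides, follows from Lemma~\ref{brualdi1}: inside the strong component $D_2$ every directed walk from a fixed class to another has length determined modulo $\kappa_2$, so two distinct classes of $D_2$ can share no step common prey within $D_2$.

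The central combinatorial step is to prove that $G$ is a disjoint union of complete subgraphs if and only if every vertex of $\ZZ_{\kappa_1}$ has \emph{exactly one} neighbour in $B_D$. Since $G$ is obtained from the graph $H$ on $\ZZ_{\kappa_1} \sqcup \ZZ_{\kappa_2}$ carrying the adjacencies just described by blowing up each vertex into a nonempty clique, $G$ is a union of cliques if and only if $H$ is; this is the standard fact that a blow-up with nonempty parts is $P_3$-free precisely when the base graph is. I would then argue that $H$ is a union of cliques iff each of its components contains exactly one vertex of $\ZZ_{\kappa_2}$: a component with two such vertices cannot be complete since they are non-adjacent, and since every vertex of $\ZZ_{\kappa_1}$ has at least one neighbour in $\ZZ_{\kappa_2}$ (read off directly from Definition~\ref{def:bipartite}, using $I(D) \neq \emptyset$) no $\ZZ_{\kappa_1}$-vertex is isolated. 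Finally, ``each component has a unique $\ZZ_{\kappa_2}$-vertex'' is equivalent to ``each $\ZZ_{\kappa_1}$-vertex has a unique neighbour'': a $\ZZ_{\kappa_1}$-vertex with two neighbours $j \neq j'$ would place both in its component, while conversely a unique neighbour $j$ forces any two $\ZZ_{\kappa_1}$-vertices sharing $j$ into the single clique consisting of $j$ together with all $\ZZ_{\kappa_1}$-vertices whose unique neighbour is $j$.

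It then remains to translate ``every $i \in \ZZ_{\kappa_1}$ has exactly one neighbour in $B_D$'' into the two stated arithmetic conditions. Fixing $i$ and $(k,l)\in I(D)$, Definition~\ref{def:bipartite} forces $p \equiv i-k-1 \pmod{\kappa_1}$, whence the neighbours of $i$ arising from $(k,l)$ form the coset $(i+l-k-1) + \langle \kappa_1 \rangle$ of the subgroup generated by $\kappa_1$ in $\ZZ_{\kappa_2}$, a set of size $\kappa_2/\gcd(\kappa_1,\kappa_2)$. For a single neighbour this size must be $1$, giving $\kappa_2 \mid \kappa_1$; and once $\kappa_2 \mid \kappa_1$ the coset collapses to the single element $i+l-k-1 \pmod{\kappa_2}$, so the neighbours contributed by different pairs coincide (for one, hence every, $i$) iff $l-k$ is constant modulo $\kappa_2$ on $I(D)$, that is, $i-j \equiv i'-j' \pmod{\kappa_2}$ for all $(i,j),(i',j') \in I(D)$. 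Conversely both conditions together plainly yield a unique neighbour for each $i$, which closes the equivalence.

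I expect the main obstacle to be the middle paragraph: pinning down precisely when the expansion is a union of cliques. The subtlety is the asymmetry of the two sides of $B_D$ --- vertices of $\ZZ_{\kappa_2}$ are mutually non-adjacent whereas vertices of $\ZZ_{\kappa_1}$ may be adjacent through common neighbours --- so that the clean criterion ``exactly one neighbour'' is a statement about the $\ZZ_{\kappa_1}$ side alone, and one must carefully rule out isolated vertices and pass correctly between ``at most one'' and ``exactly one''. The coset bookkeeping in the final paragraph is then routine.
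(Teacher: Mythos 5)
Your proof is correct, but note that the paper never proves Theorem~\ref{thm:simple} itself: it is imported verbatim from \cite{ppk} and then used, together with Lemma~\ref{lem:char}, only as the base case of the induction establishing the $\eta$-component generalization, Theorem~\ref{thm:simplegeneral}. The meaningful comparison is therefore with the machinery the paper builds for that generalization. There the authors phrase everything through the subgroup $\langle\kappa_p,\kappa_{p+1}\rangle\subset\langle\kappa_\eta\rangle$ of $\ZZ$ and prove Lemma~\ref{lem:char} (two vertices of $\ZZ_{\kappa_r}$ reach a common vertex of $\ZZ_{\kappa_\eta}$ by paths in the CS-graph iff they are congruent mod $\kappa_\eta$), and in the ``only if'' direction they extract the divisibility and difference conditions from the observation that cliques lying in a common component of $G$ must represent congruent classes. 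You instead exploit a feature special to $\eta=2$: the cliques replacing the $\ZZ_{\kappa_2}$-side of the expansion are pairwise non-adjacent, so ``union of complete subgraphs'' collapses (via the standard fact that a blow-up by nonempty cliques is $P_3$-free iff the base graph is) to ``every vertex of $\ZZ_{\kappa_1}$ has exactly one neighbour in $B_D$'', and a single coset computation --- the neighbours of $i$ contributed by $(k,l)\in I(D)$ form the coset $(i+l-k-1)+\langle\kappa_1\rangle$ of size $\kappa_2/\gcd(\kappa_1,\kappa_2)$ in $\ZZ_{\kappa_2}$ --- delivers both arithmetic conditions simultaneously. Each step checks out: Lemma~\ref{lem:subgraph} does guarantee the replacing cliques are the nonempty sets of imprimitivity, $I(D)\neq\emptyset$ by weak connectivity so no $\ZZ_{\kappa_1}$-vertex is isolated, and the mutual non-adjacency of the $\ZZ_{\kappa_2}$-cliques is forced both by the arc-form convention in Definition~\ref{def:expansion} and, semantically, by Lemma~\ref{brualdi1}. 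Your route is more elementary and self-contained for the two-component case; the paper's route, at the cost of Lemma~\ref{lem:char} and an induction, scales to arbitrary $\eta$, where the $\ZZ_{\kappa_q}$-sides for $q<\eta$ are no longer independent sets and the ``unique neighbour'' criterion no longer suffices.
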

In the following, we present a generalization of the above theorem. To do so, we need the following lemma:
\begin{Lem}
Let $D$ be a linearly connected digraph with only nontrivial strong components $D_1$, $D_2$, $\ldots$, $D_\eta$ ($\eta \ge 2$) each of whose arcs goes from $D_i$ to $D_{i+1}$ for some $i \in \{1, 2, \ldots, \eta-1\}$. Suppose that, in the CS-graph $PT_D^{(\eta)}$ of $D$, $\kappa_\eta$ divides $\kappa_p$ and $i-j\equiv i'-j'\pmod{\kappa_\eta}$ for any $({i},{j})$, $({i'},{j'})\in I(D_{p,p+1})$ for each $p=1$, $\ldots$, $\eta-1$. Then there exist an $(x,k)$-path and a $(y,k)$-path for some $k \in \ZZ_{\kappa_\eta}$ in $PT_D^{(\eta)}$ if and only if  $x \equiv y \pmod{\kappa_\eta}$ for any $x$, $y$ in $\ZZ_{\kappa_r}$ and for any $r=1$, $\ldots$, $\eta$.
\label{lem:char}
\end{Lem}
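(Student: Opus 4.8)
# Proof Proposal for Lemma~\ref{lem:char}

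\textbf{Overview of the approach.} The plan is to establish a congruence invariant that is preserved along every edge of the CS-graph $PT_D^{(\eta)}$, and then show that this invariant completely governs reachability to a common vertex. The hypothesis gives us two structural facts for each consecutive pair $D_{p,p+1}$: first, $\kappa_\eta \mid \kappa_p$ (so working modulo $\kappa_\eta$ is compatible with working modulo $\kappa_p$), and second, the quantity $i-j$ is constant modulo $\kappa_\eta$ over all $(i,j) \in I(D_{p,p+1})$. I would denote this common value by $c_p := i - j \pmod{\kappa_\eta}$. The edges of $B_{p,p+1}$ are obtained from $I(D_{p,p+1})$ by the shift rule in Definition~\ref{def:bipartite}: $(a,b) \in E(B_{p,p+1})$ iff $a \equiv k+1+t$ and $b \equiv l+t$ for some $(k,l) \in I(D_{p,p+1})$ and some integer $t$. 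Thus along any such edge, $a - b \equiv (k+1) - l \equiv c_p + 1 \pmod{\kappa_\eta}$, a \emph{fixed} residue depending only on $p$.

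\textbf{Key steps.} First I would prove the forward invariance: if $(a,b)$ is an edge of $B_{p,p+1}$ with $a \in \ZZ_{\kappa_p}$ and $b \in \ZZ_{\kappa_{p+1}}$, then $a \equiv b + (c_p+1) \pmod{\kappa_\eta}$, using the reduction $\kappa_\eta \mid \kappa_p$ to make the reductions mod $\kappa_p$ and mod $\kappa_{p+1}$ pass to reductions mod $\kappa_\eta$. Second, iterating along any path in $PT_D^{(\eta)}$ that descends through consecutive strata $V_p, V_{p+1}, \ldots, V_{p'}$, the endpoints satisfy $a \equiv a' + \sum_{t=p}^{p'-1}(c_t + 1) \pmod{\kappa_\eta}$; call this accumulated offset $S(p,p')$. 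Note that $S(p,p')$ depends only on the levels $p$ and $p'$, not on the particular path chosen—this is the crux. Third, for the `only if' direction, suppose $x \in \ZZ_{\kappa_r}$ and $y \in \ZZ_{\kappa_{r'}}$ both reach a common $k \in \ZZ_{\kappa_\eta}$; without loss of generality $r \le r'$, and both paths terminate at level $\eta$, so $x \equiv k + S(r,\eta)$ and $y \equiv k + S(r',\eta) \pmod{\kappa_\eta}$. Since any path from level $r'$ to $\eta$ is a \emph{sub}-accumulation that also appears inside the longer run from $r$, we have $S(r,\eta) = S(r,r') + S(r',\eta)$, and I would then argue the common value of $k$ forces $x \equiv y \pmod{\kappa_\eta}$; here one must check that the relevant residues align, which is where constancy of $c_p$ is used decisively. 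Fourth, for the `if' direction, given $x \equiv y \pmod{\kappa_\eta}$, I would construct explicit paths to $\kappa_\eta$: since each $B_{p,p+1}$ is nonempty (arcs exist because $A_{p,p+1}\neq O$) and every edge shifts the residue by the \emph{same} amount $c_p+1$, I can extend any vertex downward one level at a time, and the congruence $x \equiv y$ guarantees that the two descending paths from the levels of $x$ and $y$ arrive at the same residue class in $\ZZ_{\kappa_\eta}$, hence at a common vertex $k$.

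\textbf{Main obstacle.} The delicate point is verifying \emph{path-independence} of the accumulated offset together with \emph{existence} of downward-extending edges at each level. Constancy of $c_p$ handles path-independence cleanly, but I must confirm that from \emph{every} residue class in $V_p$ there is actually an edge down to some vertex in $V_{p+1}$—i.e. that the shift map on residues is surjective onto the reachable classes—so that the constructive `if' direction does not stall at an intermediate level. I expect to resolve this by invoking that $A_{p,p+1}$ is a nonzero block (from the form~(\ref{form})), which guarantees $I(D_{p,p+1}) \neq \emptyset$, and then using the free integer parameter $t$ in the edge definition to realize \emph{every} vertex of $\ZZ_{\kappa_p}$ as the left endpoint of some edge. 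Once surjectivity of the shift is in hand, both directions follow from tracking the single residue-mod-$\kappa_\eta$ invariant along paths.
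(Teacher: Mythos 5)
Your proposal is correct and follows essentially the same route as the paper: the `only if' direction rests on the same telescoping residue invariant (each edge of $B_{p,p+1}$ shifts the residue mod $\kappa_\eta$ by the fixed amount $c_p+1$, which is exactly the paper's observation that $(s_m-s_{m+1})-(g_m-g_{m+1})-1\in\langle\kappa_m,\kappa_{m+1}\rangle\subset\langle\kappa_\eta\rangle$ with $g_m-g_{m+1}$ constant mod $\kappa_\eta$), and your level-by-level downward construction for the `if' direction, using the free parameter $t$ to realize every vertex of $\ZZ_{\kappa_p}$ as a left endpoint, is the same argument the paper packages as an induction on $\eta-p$. The only slight wobble is your step allowing $x$ and $y$ at different levels $r\neq r'$, which the lemma does not require and which would not yield $x\equiv y$; for the stated same-level case your argument goes through.
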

\begin{proof}
To show the `only if' part, suppose that, for some $r \in \{1, \ldots, \eta \}$, there exist an $(x,k)$-path and a $(y,k)$-path in $PT_D^{(\eta)}$ for some $k \in \ZZ_{\kappa_\eta}$ and $x$, $y$ in $\ZZ_{\kappa_r}$. If $r=\eta$, then $x \equiv y \pmod{\kappa_\eta}$ and the lemma is trivially true. Now suppose that $r < \eta$. Then there exist an $(x,k)$-path $xs_{r+1}\cdots s_{\eta-1}k$ and a $(y,k)$-path $yt_{r+1}\cdots t_{\eta-1}k$ where  $\{s_m, t_m\} \subset \ZZ_{\kappa_m}$ for $m=r+1$, $\ldots$, $\eta-1$. By the definition of $B_D$,  for each $m=r$, $\ldots$, $\eta-1$,
\[(s_{m}-s_{m+1})-(g_{m}-g_{m+1})-1 \in \langle \kappa_{m},\kappa_{m+1} \rangle \subset \langle \kappa_\eta \rangle\]
and
\[(t_{m}-t_{m+1})-(h_{m}-h_{m+1})-1\in \langle \kappa_{m},\kappa_{m+1} \rangle \subset \langle \kappa_\eta \rangle\]
where $s_r=x$, $t_r=y$, $s_\eta=t_\eta=k$, $(g_{m},g_{m+1}) \in I(D_{m,m+1})$,  $(h_{m},h_{m+1}) \in I(D_{m,m+1})$, $\langle \kappa_\eta \rangle=\{\alpha \kappa_\eta \mid \alpha \in \ZZ\}$, and $\langle \kappa_{m},\kappa_{m+1} \rangle=\{\beta\kappa_{m}+\gamma\kappa_{m+1} \mid \beta \in \ZZ, \gamma \in \ZZ\}$. By the hypothesis, $(g_m-g_{m+1})-(h_m-h_{m+1}) \in \langle \kappa_\eta \rangle$ and so $(s_m-s_{m+1})-(t_m-t_{m+1}) \in \langle \kappa_\eta \rangle$ for each $m=r$, $\ldots$, $\eta-1$. Therefore, $(x-k)-(y-k) \in \langle \kappa_\eta \rangle$ and this completes the proof of the `only if' part.

We prove the `if' part by induction on $\eta-p$ for $p=1$, $\ldots$, $\eta-1$. Suppose that $x \equiv y \pmod{\kappa_\eta}$ for $x$, $y$ in $\ZZ_{\kappa_{\eta-1}}$. Since $D$ is linearly connected, there is an element $(k,l)$ in $I(D_{\eta-1,\eta})$. Since $x$, $y$, and $k$ belong to $\ZZ_{\kappa_{\eta-1}}$, $x \equiv k+m+1 \pmod{\kappa_{\eta-1}}$  and $y \equiv k+m'+1 \pmod{\kappa_{\eta-1}}$ for some integers $m$ and $m'$.  Then, by definition of $B_D$, $(x,z) \in B_{\eta-1,\eta}$ and $(x,z') \in B_{\eta-1,\eta}$ for a vertex $z \in \ZZ_{\kappa_\eta}$ satisfying $z \equiv l+m \pmod{\kappa_\eta}$ and  a vertex $z' \in \ZZ_{\kappa_\eta}$ satisfying $z' \equiv l+m' \pmod{\kappa_\eta}$. Since $\kappa_\eta \mid \kappa_{\eta-1}$, $x \equiv k+m+1 \pmod{\kappa_{\eta}}$ and $y \equiv k+m'+1 \pmod{\kappa_{\eta}}$. Since $x \equiv y \pmod{\kappa_\eta}$, $m \equiv m' \pmod{\kappa_\eta}$ and so $z \equiv z' \pmod{\kappa_\eta}$. Since both $z$ and $z'$ belong to $\ZZ_{\kappa_\eta}$, $z=z'$ by the definition of $PT_D^{(\eta)}$ and so the `if' part is true for $\eta-p=1$.

Suppose that the `if' part is true for $\eta-p$ $(p \ge 1)$. Then we assume that $x \equiv y \pmod{\kappa_\eta}$ for $x$, $y$ in $\ZZ_{\kappa_{\eta-p-1}}$. By applying a similar argument as above, we may show that there exist vertices $w$ and $w'$ in $\ZZ_{\kappa_{\eta-p}}$ such that $w \equiv w' \pmod{\kappa_\eta}$ and there are edges $(x,w)$ and $(y,w')$ in $PT_D^{(\eta)}$. Then, by the induction hypothesis, there exist a $(w,z)$-path $P$ and a $(w',z)$-path $Q$ for $z \in \ZZ_{\kappa_\eta}$. Accordingly $xP$ and $yQ$ are an $(x,z)$-path and a $(y,z)$-path, respectively, and this completes the proof of the `if' part.
\end{proof}

\begin{Thm} Let $D$ be a linearly connected digraph with only nontrivial strong components $D_1$, $D_2$, $\ldots$, $D_\eta$ ($\eta \ge 2$) each of whose arcs goes from $D_i$ to $D_{i+1}$ for some $i \in \{1,2 \ldots, \eta-1\}$.  Suppose that a graph $G$ is the limit of $\{C(D^m)\}_{m=1}^{\infty}$.
Then $G$ is a union of complete graphs if and only if $\kappa_\eta$ divides $\kappa_p$  and $i-j\equiv i'-j'\pmod{\kappa_\eta}$ for any $({i},{j})$, $({i'},{j'})\in I(D_{p,p+1})$ for each $p=1$, $\ldots$, $\eta-1$.
\label{thm:simplegeneral}
\end{Thm}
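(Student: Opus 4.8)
The plan is to read everything off the description of $G$ as an expansion of the CS-graph. By Theorem~\ref{thm:main3}, $V(G)$ is partitioned into the cliques $U_i^{(j)}$, and by Theorem~\ref{thm:main} two of them, say $U_i^{(p)}$ and $U_j^{(q)}$ with $p\le q$, are completely joined in $G$ exactly when some index in a part $V_r$ with $r\ge\max\{p+1,q\}$ is reachable by CS-paths from both $i$ and $j$ (with that index forced to equal $j$ when $r=q$). Since $I(D_{p,p+1})\neq\emptyset$ for every $p$, each vertex of $PT^{(\eta)}_D$ has a successor in the next part, so every path can be prolonged to $V_\eta$; writing $R(i,p)\subseteq V_\eta=\ZZ_{\kappa_\eta}$ for the set of layer-$\eta$ indices reachable from $(i,p)$ (and $R(j,\eta):=\{j\}$), the joining condition collapses to: $U_i^{(p)}$ and $U_j^{(q)}$ are joined iff $R(i,p)\cap R(j,q)\neq\emptyset$. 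Thus $G$ is governed entirely by the sets $R(i,p)$.

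I would first isolate the clean reduction: \emph{$G$ is a union of complete graphs if and only if every $R(i,p)$ is a singleton}. For one direction, note that two distinct top-layer cliques $U_a^{(\eta)},U_b^{(\eta)}$ ($a\neq b$) are never joined, since there is no part beyond $V_\eta$ and no CS-path connects distinct vertices of $V_\eta$; hence if $R(i,p)\supseteq\{a,b\}$ with $a\neq b$, then $U_a^{(\eta)},U_i^{(p)},U_b^{(\eta)}$ induce a $P_3$ and $G$ is not a union of cliques. Conversely, once all $R$-sets are singletons the relation ``$U_i^{(p)}$ joined to $U_j^{(q)}$'' reads ``$R(i,p)=R(j,q)$'', which is plainly an equivalence relation whose classes are the complete components of $G$.

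For the \emph{if} part, assume $\kappa_\eta\mid\kappa_p$ for every $p$ and that $k-l$ is constant modulo $\kappa_\eta$ on each $I(D_{p,p+1})$. Reading Definition~\ref{def:bipartite} modulo $\kappa_\eta$ (legitimate because $\kappa_\eta$ divides each $\kappa_p$), every edge $(a,b)$ of $B_{p,p+1}$ satisfies $a-b\equiv c_p\pmod{\kappa_\eta}$ for a single residue $c_p$ determined by the common value of $k-l$. The ``potential'' $\Phi(a,p):=a+\sum_{t<p}c_t\pmod{\kappa_\eta}$ is therefore invariant along every edge, hence along every path, of $PT^{(\eta)}_D$; since $\Phi$ restricts to a bijection on $V_\eta$, all paths leaving a fixed vertex terminate at the \emph{same} layer-$\eta$ index. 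So every $R(i,p)$ is a singleton and $G$ is a union of complete graphs by the reduction. (This $\Phi$ is just the global form of the congruence already extracted, one part at a time, in Lemma~\ref{lem:char}.)

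For the \emph{only if} part I would argue the contrapositive: if the arithmetic condition fails, pick the \emph{largest} index $p$ at which it fails. For all larger indices the condition holds, so the argument of the previous paragraph (applied to the sub-digraph on parts $p+1,\ldots,\eta$, or equivalently Lemma~\ref{lem:char} with $r=p+1$) makes the reachability map $V_{p+1}\to V_\eta$ single-valued and injective on residues mod $\kappa_\eta$. It then suffices to exhibit one $i\in V_p$ with two neighbours in $V_{p+1}$ that are incongruent mod $\kappa_\eta$, for these reach distinct vertices of $V_\eta$ and make $R(i,p)$ non-singleton. A direct computation from Definition~\ref{def:bipartite} shows that the residues mod $\kappa_\eta$ of the $V_{p+1}$-neighbours of a fixed $i$ form $\bigcup_{(k,l)\in I(D_{p,p+1})}\bigl(i-(k-l)-1+\langle\gcd(\kappa_p,\kappa_\eta)\rangle\bigr)$, a single residue precisely when $\kappa_\eta\mid\kappa_p$ \emph{and} $k-l$ is constant mod $\kappa_\eta$. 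Hence the failure of the condition at $p$, in either of its two modes, forces at least two incongruent successors, and $G$ is not a union of cliques. I expect the main obstacle to be exactly this last step: checking that each of the two failure modes yields \emph{genuinely incongruent}-mod-$\kappa_\eta$ successors, and that restricting to the largest failing index legitimately transports those residues all the way down to $V_\eta$.
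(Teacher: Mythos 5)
Your proposal is correct, but it takes a genuinely different route from the paper's. The paper proves Theorem~\ref{thm:simplegeneral} by induction on $\eta$: it deletes $D_1$, observes that the limit for the remaining digraph is $G-V(D_1)$, uses the two-component case (Theorem~\ref{thm:simple}) together with Lemma~\ref{lem:char} as the engine, and then treats the interface between $V_1$ and $V_2$ by directly manipulating the congruences defining $B_{1,2}$. You instead argue globally on the CS-graph: you reduce ``$G$ is a union of complete graphs'' to ``every reachability set $R(i,p)\subseteq\ZZ_{\kappa_\eta}$ is a singleton'', prove the ``if'' direction with a telescoping potential $\Phi$ invariant along CS-edges (which is, as you note, the global packaging of Lemma~\ref{lem:char}), and prove the ``only if'' direction by localizing the failure at the largest bad index $p$ and computing the residue set $\bigcup_{(k,l)}\bigl(i-(k-l)-1+\langle\gcd(\kappa_p,\kappa_\eta)\rangle\bigr)$ of the $V_{p+1}$-neighbours of $i$, which is a single residue class mod $\kappa_\eta$ precisely under the stated arithmetic condition. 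Your version is self-contained (it does not import the $\eta=2$ theorem of \cite{ppk} as a black box) and it exhibits the components of the limit explicitly as the fibres of $\Phi$; the paper's version is shorter step by step because it recycles earlier results. Two small points to tidy. First, your justification that distinct top-layer cliques are never joined (``no CS-path connects distinct vertices of $V_\eta$'') is only valid if CS-paths are read as layer-monotone, as the paper implicitly does throughout; the airtight argument is that two vertices in distinct sets of imprimitivity of $D_\eta$ can have no $m$-step common prey, since such a prey would lie in two different sets of imprimitivity. Second, the ``main obstacle'' you flag at the end is already disposed of by your own residue computation: failure of the divisibility makes $\langle\gcd(\kappa_p,\kappa_\eta)\rangle$ nontrivial mod $\kappa_\eta$, and failure of the constancy of $k-l$ shifts the base point, so either mode visibly produces two incongruent successors, and the potential argument on layers $p+1,\ldots,\eta$ (where the condition holds by maximality of $p$) carries them to distinct vertices of $V_\eta$.
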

\begin{proof}
By induction on the number $\eta$ of strong components of a linearly connected digraph. By Theorem~\ref{thm:simple} and Lemma~\ref{lem:char}, the statement is true for $\eta=2$.  Suppose that the statement is true for $\eta-1$ ($\eta \ge 3$). Let $D$ be a linearly connected digraph with only nontrivial strong components $D_1$, $D_2$, $\ldots$, $D_\eta$ each of whose arcs goes from $D_i$ to $D_{i+1}$ for some $i \in \{1, 2, \ldots, \eta-1\}$. We delete $D_1$ from $D$ to obtain a linearly connected digraph $F$ with $\eta-1$ strong components $D_2$, $\ldots$, $D_\eta$. By the induction hypothesis, $\{C(F^m)\}_{m=1}^{\infty}$ converges to a union of complete graphs if and only if $\kappa(D_\eta)$ divides $\kappa(D_p)$  and $i-j\equiv i'-j'\pmod{\kappa(D_\eta)}$ for any $({i},{j})$, $({i'},{j'})\in I(D_{p,p+1})$ for each $p=2$, $\ldots$, $\eta-1$.

 By the hypothesis, $\{C(D^m)\}_{m=1}^{\infty}$ converges to $G$. By Theorem~\ref{thm:main3}, $G$ is an expansion of $PT_D^{(\eta)}$. Since there is no way to reach from a vertex in $D_j$ to a vertex in $D_i$ for $j > i$, $\{C(F^m)\}_{m=1}^{\infty}$ converges to a graph, say $G'$, and  $G'$ is an expansion of $PT_D^{(\eta)}-\ZZ_{\kappa_1}$.  Therefore  $G'=G-V(D_1)$ by the way in which the expansion is constructed in the proof of  Theorem~\ref{thm:main3}.

To show the `only if' part, suppose that $G$ is a union of complete subgraphs. Then, since $G'=G-V(D_1)$, $G'$ is a union complete subgraphs. Hence, by induction hypothesis, $\kappa_\eta$ divides $\kappa_p$ and $i-j\equiv i'-j'\pmod{\kappa_\eta}$ for any $({i},{j})$, $({i'},{j'})\in I(D_{p,p+1})$ for each $p=2$, $\ldots$, $\eta-1$.

We first make the following observation: Suppose that the complete graph replacing $x$ and the complete graph replacing $y$  are contained in the same component in $G$ for $x$ and $y$ in $\ZZ_{\kappa_2}$. Then, for some $r \in \{2,\ldots,\eta \}$, there are an $(x,k')$-path and a $(y,k')$-path for some $k' \in \ZZ_{\kappa_r}$ by the definition of expansion. Since $D$ is linearly connected, $PT_D^{(\eta)}$ is connected and so there is a $(k',k)$-path for some $k \in \ZZ_{\kappa_\eta}$ in $PT_D^{(\eta)}$. Then, by Lemma~\ref{lem:char}, $x\equiv y \pmod{\kappa_\eta}$.

Now we show that $\kappa_\eta \mid \kappa_1$. Since $D$ is linearly  connected, there is an edge $(a,b)$ in $B_{1,2}$ and so, by the definition of $B_D$, for some $(i,j) \in I(D_{1,2})$ and for some integer $l$,
\[a \equiv i+l+1 \pmod{\kappa_1}, \quad b \equiv j+l \pmod{\kappa_2}.\]
Since $(i,j) \in I(D_{1,2})$, it is true that $(i+l+\kappa_1+1, j+l+\kappa_1)$ is an edge of $B_{1,2}$. Since $i+l+\kappa_1+1 \equiv a \pmod{\kappa_1}$, it is true that $(a,j+l+\kappa_1)$ is an edge of $B_{1,2}$. Then, since $(a,b)$ is also an edge of $PT_D^{(\eta)}$, the complete graphs replacing $b$ and $j+l+\kappa_1$ should be contained in the same component in $G$ by the hypothesis. By the observation above, $b \equiv j+l+\kappa_1 \pmod{\kappa_\eta}$. Since $b \equiv j+l \pmod{\kappa_2}$ and $\kappa_\eta \mid \kappa_2$, we can conclude that $\kappa_\eta \mid \kappa_1$.

Take $({i},{j})$, $({i'},{j'})\in I(D_{1,2})$.
Without loss of generality, we may assume that $i'>i$.
Since $({i},{j})\in I(D_{1,2})$, by the definition of $B_D$, $(i+ (i'-i)+1,  j+(i'-i)) $ is an edge of $B_{1,2}$ and so is $(i'+1,j +i'-i )$.
In addition, $(i'+1,j')$ is an edge of $B_{1,2}$ as $({i'},{j'}) \in I(D_{1,2})$.
Therefore both $(i'+1,j+i'-i)$ and  $(i'+1,j')$ are edges of $B_{1,2}$. Then, by the hypothesis, the complete graphs replacing $j+i'-i$ and $j'$ should be contained in the same component in $G$.  By the above observation again, $j+i'-i \equiv j' \pmod{\kappa_\eta}$.
Thus $i-j\equiv i'-j'\pmod{\kappa_\eta}$.

To show the `if' part, suppose that $\kappa_\eta$ divides $\kappa_p$  and $i-j\equiv i'-j'\pmod{\kappa_\eta}$ for any $({i},{j})$, $({i'},{j'})\in I(D_{p,p+1})$ for each $p=1$, $\ldots$, $\eta-1$. By the induction hypothesis, $G'$ is a union of complete subgraphs. Take a vertex $a\in \ZZ_{\kappa_1}$ of $PT_D^{(\eta)}$. If $a$ has no neighbor in $B_{1,2}$, then its degree is zero and the complete graph representing it in $G$ is a component of $G$.
Suppose that $a$ has a neighbor in $B_{1,2}$.
Let $(a,b)$ and $(a,c)$ are edges of $B_{1,2}$. Then, by definition, for some $({i},{j})$, $({i'},{j'})\in I(D_{1,2})$ and
for some integers $l$, $\l'$,
\[\begin{array}{llll}
a \equiv i+\l+1 \pmod{\kappa_1},   & b \equiv j+ l \pmod{\kappa_2},\\
a \equiv i'+l'+1 \pmod{\kappa_1}, & c\equiv j'+ l' \pmod{\kappa_2}.
\end{array}\]
Since $\kappa_\eta|\kappa_1$,
\[ a \equiv i+l+1  \equiv i'+l'+1 \pmod{\kappa_\eta},\]
and so $l-l'\equiv i'-i\pmod{\kappa_\eta}$.
Since $\kappa_\eta|\kappa_2$, 
\[b-c\equiv (j+l) -(j'+l') \equiv (j-j')+(l-l') \equiv (j-j')+ (i'-i) \equiv 0 \pmod{\kappa_\eta}.\]
Thus, by Lemma~\ref{lem:char}, there exist a $(b,z)$-path and a $(c,z)$-path for some $z \in \ZZ_{\kappa_\eta}$.  By the definition of  expansion, the complete graph replacing $b$ and the complete graph replacing $c$ induce the complete subgraph of $G$. Thus $G$ is a union of complete graphs.
\end{proof}

We may translate Theorem~\ref{thm:simplegeneral} into matrix version as follows:
\begin{Cor}
Suppose that $A \in \BBB_n$ is a matrix in the form given in (\ref{form}) where the block $A_{ii}$ has order at least two for each $i=1$, $\ldots$, $\eta$. Let $A'$ be the limit of $\{\Gamma(A^m) \}_{m=1}^\infty$. Then the following are equivalent:
 \begin{itemize}
 \item[(a)] The matrix $PA'P^T$ is a JBD matrix.
 \item[(b)] For each $p=1$, $\ldots$, $\eta-1$, $\kappa_\eta$ divides $\kappa_p$  and $i-j\equiv i'-j'\pmod{\kappa_\eta}$ whenever $A_{ij}^{(p,p+1)}$ and $A_{i'j'}^{(p,p+1)}$ defined in $(\S)$ are nonzero matrices.
 \end{itemize}
\end{Cor}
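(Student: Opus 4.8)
The plan is to reduce the statement to its digraph-theoretic counterpart, Theorem~\ref{thm:simplegeneral}, via the dictionary between $A$ and its digraph $D:=D(A)$. First I would record that, because $A$ is in the form (\ref{form}) with every diagonal block $A_{ii}$ irreducible and of order at least two, the digraph $D$ is linearly connected with exactly $\eta$ strong components $D_1,\ldots,D_\eta$, all of which are nontrivial, and $\kappa_p=\kappa(D_p)$ is the index of imprimitivity of $A_{pp}$ for each $p$. This places $D$ squarely in the hypothesis of Theorem~\ref{thm:simplegeneral}.

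Next I would identify the two objects being compared on each side of the equivalence. Since $\Gamma(B)$ is the adjacency matrix of the competition graph $C(D(B))$ for any $B\in\BBB_n$, and since $D(A^m)=D^m$, the matrix $\Gamma(A^m)$ is the adjacency matrix of $C(D^m)$ for every $m$. Hence the limit $A'$ of $\{\Gamma(A^m)\}_{m=1}^\infty$ is precisely the adjacency matrix of the limit graph $G$ of $\{C(D^m)\}_{m=1}^\infty$ guaranteed by Theorem~\ref{thm:main3}. The key observation for part (a) is then that a symmetric $(0,1)$-matrix with zero diagonal is the adjacency matrix of a vertex-disjoint union of complete graphs if and only if it is JBD; because the JBD property is defined up to simultaneous permutation of rows and columns, $PA'P^T$ is JBD if and only if $A'$ is JBD, which holds if and only if $G$ is a union of complete graphs. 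This matches the left-hand side of the equivalence in Theorem~\ref{thm:simplegeneral}.

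For the combinatorial condition, I would invoke the correspondence recorded just before the previous corollary, namely that for a matrix in the form (\ref{form}) with all diagonal blocks of order at least two, the block $A_{ij}^{(p,p+1)}$ defined in $(\S)$ is nonzero if and only if $(i,j)\in I(D_{p,p+1})$. Under this dictionary, condition (b)---that $\kappa_\eta\mid\kappa_p$ and $i-j\equiv i'-j'\pmod{\kappa_\eta}$ whenever $A_{ij}^{(p,p+1)}$ and $A_{i'j'}^{(p,p+1)}$ are nonzero, for each $p=1,\ldots,\eta-1$---is word for word the condition on the sets $I(D_{p,p+1})$ appearing in Theorem~\ref{thm:simplegeneral}. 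Applying that theorem then yields (a) $\Leftrightarrow$ (b).

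I expect the only point requiring real care to be the matrix-to-graph translation of part (a): one must be explicit that \emph{JBD} is exactly the adjacency-matrix description of a vertex-disjoint union of cliques, and that conjugating by the permutation matrix $P$ can neither create nor destroy this property since JBD is already a permutation-invariant notion. Everything else is a transcription of notation, so the proof will be short; the mathematical substance has already been carried by Theorem~\ref{thm:simplegeneral} and, through it, by Theorem~\ref{thm:main3} and Lemma~\ref{lem:char}.
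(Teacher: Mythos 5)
Your proposal is correct and follows exactly the route the paper intends: the paper states this corollary without proof, presenting it simply as the matrix translation of Theorem~\ref{thm:simplegeneral} via the dictionary $\Gamma(A^m) \leftrightarrow C(D^m)$, JBD $\leftrightarrow$ union of complete graphs, and $A_{ij}^{(p,p+1)} \neq O \leftrightarrow (i,j)\in I(D_{p,p+1})$, which is precisely what you spell out. Your added care about permutation-invariance of the JBD property is a reasonable explicit check of a point the paper leaves tacit.
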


\section{Closing remarks}
In this paper, we found the limit of the matrix sequence $\{\Gamma(A^m) \}_{m=1}^\infty$ for a matrix $A$ in the form given in (\ref{form}) when all of the diagonal blocks are of order at least two.  We know from Theorem~\ref{thm:trivial} that there are other cases where $\{\Gamma(A^m) \}_{m=1}^\infty$ converges. We suggest that the limit of $\{\Gamma(A^m) \}_{m=1}^\infty$ be computed for $A$ satisfying the condition for each of such cases.

\end{document}